\documentclass[journal]{IEEEtran}

\usepackage[nospace]{cite}

\usepackage{algpseudocode}
\usepackage{amssymb} 
\usepackage{nicefrac}
\usepackage{xcolor}
\usepackage[utf8]{inputenc} 
\usepackage[T1]{fontenc}    
\usepackage{verbatim} 
\usepackage{hyperref}       
\usepackage{url}            
\usepackage{booktabs}       
\usepackage{amsfonts}       
\usepackage{nicefrac}       
\usepackage{microtype}      
\usepackage{amsmath}
\usepackage{amsthm}
\usepackage{amsfonts}
\usepackage{nicefrac}
\usepackage{enumitem}
\usepackage{graphicx}
\usepackage{subcaption}
\usepackage{mathrsfs}
\usepackage{float}
\usepackage{placeins}
\usepackage{mathtools}
\usepackage{dashrule}
\usepackage{booktabs,multirow,array,longtable,tabularx}
\usepackage{amsmath,amssymb,bm}
\usepackage{xcolor,colortbl}
\usepackage{caption,subcaption}
\usepackage{pdflscape}
\usepackage{threeparttable}
\usepackage[linesnumbered,ruled,vlined]{algorithm2e}

\theoremstyle{plain}
\newcommand{\coloneqq}{\mathrel{\mathop:}=}
\newtheorem{theorem}{Theorem}
\newtheorem{definition}{Definition}

\newtheorem{proposition}[theorem]{Proposition}

\theoremstyle{definition}
\newtheorem{remark}[theorem]{Remark}
\theoremstyle{remark}

\newcommand{\mtrx}[1]{\mathbf{#1}}        

\newcommand{\proj}{\Pi}

\definecolor{wingreen}{HTML}{2E7D32}
\definecolor{losered}{HTML}{C62828}
\definecolor{warnorg}{HTML}{E65100}
\definecolor{hdrblue}{HTML}{1565C0}
\definecolor{lightblue}{HTML}{E3F2FD}
\definecolor{lightgreen}{HTML}{E8F5E9}
\definecolor{lightyellow}{HTML}{FFF8E1}
\definecolor{lightgray}{gray}{0.93}

\newcommand{\Rtot}{R_{\mathrm{tot}}}
\newcolumntype{C}[1]{>{\centering\arraybackslash}p{#1}}
\newcolumntype{R}[1]{>{\raggedleft\arraybackslash}p{#1}}
\newcolumntype{L}[1]{>{\raggedright\arraybackslash}p{#1}}
\newcommand{\cmark}{\checkmark}
\newcommand{\xmark}{\ensuremath{\times}}
\newcommand{\pmark}{\ensuremath{\circ}}
\renewcommand{\arraystretch}{1.25}

\newcommand{\mM}{\mathbf{M}}
\newcommand{\mL}{\mathbf{L}}
\newcommand{\mA}{\mathbf{A}}
\newcommand{\mF}{\mathbf{F}}
\newcommand{\mLb}{\mathbf{L}_{\mathrm{b}}}
\newcommand{\mLa}{\mathbf{L}_{\mathrm{a}}}
\newcommand{\mB}{\mathbf{B}}
\newcommand{\mBa}{\mathbf{B}_{\mathrm{a}}}
\newcommand{\mK}{\mathbf{K}}
\newcommand{\mH}{\mathbf{H}}
\newcommand{\mD}{\mathbf{D}}
\newcommand{\mI}{\mathbf{I}}
\newcommand{\mR}{\mathbf{R}}
\newcommand{\mT}{\mathbf{T}}
\newcommand{\mU}{\mathbf{U}}
\newcommand{\mC}{\mathbf{C}}
\newcommand{\mS}{\mathbf{S}}

\newcommand{\vw}{\mathbf{w}}
\newcommand{\va}{\mathbf{a}}
\newcommand{\vc}{\mathbf{c}}
\newcommand{\vd}{\mathbf{d}}
\newcommand{\ve}{\mathbf{e}}
\newcommand{\vz}{\mathbf{z}}
\newcommand{\vone}{\mathbf{1}}
\newcommand{\vzero}{\mathbf{0}}

\newcommand{\vx}{\mathbf{x}}

\newcommand{\Tr}{\operatorname{Tr}}
\newcommand{\diag}{\operatorname{diag}}
\newcommand{\Real}{\mathbb{R}}
\newcommand{\Sym}{\mathbb{S}}
\newcommand{\Ec}{\mathcal{E}_{\mathrm{c}}}
\newcommand{\Eb}{\mathcal{E}_{\mathrm{b}}}
\newcommand{\Eused}{\mathcal{E}_{\mathrm{used}}}
\newcommand{\Erem}{\mathcal{E}_{\mathrm{rem}}}

\newcommand{\Kcone}{\mathcal{K}}
\newcommand{\Cset}{\mathcal{C}}
\DeclareMathOperator{\svec}{svec}

\newcommand{\mQ}{\mathbf{Q}}

\newcommand{\vb}{\mathbf{b}}
\newcommand{\vq}{\mathbf{q}}
\newcommand{\vr}{\mathbf{r}}
\newcommand{\vs}{\mathbf{s}}
\newcommand{\vu}{\mathbf{u}}       
\newcommand{\vv}{\mathbf{v}}       
\newcommand{\vy}{\mathbf{y}}

\newcommand{\vomega}{\mathbf{s}}

\algrenewcommand\algorithmicrequire{\textbf{Input:}}
\algrenewcommand\algorithmicensure{\textbf{Output:}}
\algnewcommand\algorithmicsetup{\textbf{Setup:}}
\algnewcommand\Setup{\item[\algorithmicsetup]}
\algnewcommand\algorithmicinit{\textbf{Init:}}
\algnewcommand\Init{\item[\algorithmicinit]}

\hypersetup{
  colorlinks=true,
  linkcolor=blue,
  citecolor=blue,
  urlcolor=blue
}

\ifCLASSINFOpdf

\else

\fi

\begin{document}

\title{Budget-Constrained Graph Augmentation for Robust Network Design via Kirchhoff Index Minimization}

\author{Omkar~Bhoite,
        V~Sateeshkrishna~Dhuli, \IEEEmembership{Senior Member, IEEE},
        Stefan~Werner, \IEEEmembership{Fellow, IEEE},
        and~Kimmo~Kansanen, \IEEEmembership{Senior Member, IEEE}
\thanks{O. Bhoite,
        S. Werner,
        and K. Kansanen are with the Department
of Electronic Systems, Norwegian University of Science and Technology (NTNU), Trondheim, 7032 Norway. E-mail: \{omkar.bhoite, stefan.werner, kimmo.kansanen\}@ntnu.no. S. Werner is also affiliated with Department of Information and Communication Engineering, Aalto University, Finland.
        V S Dhuli is with the Department of ECE, SRM University-AP, Amaravati 522240, Andhra Pradesh, India, E-mail: \{dvskrishna.nitw@gmail.com\}.  A conference version of this paper is accepted to EUSIPCO 2026. This work is supported in part by the MoST (MobilitetsLab Stor-Trondheim) project; the PERSEUS project, a European Union’s Horizon 2020 research and innovation program under the Marie Sk{\l}odowska-Curie grant agreement No 101034240.}}

\markboth{Submitted manuscript}%
{Bhoite \MakeLowercase{\textit{et al.}}: Budget-Constrained Graph Augmentation}

\maketitle

\begin{abstract}
Enhancing the robustness of deployed networks against failures and disruptions is critical for reliable operation. This requires deciding which new links to install and how strongly to weight them under limited resources. We study this problem through the Kirchhoff index, or total effective resistance, a spectral measure of global connectivity. The resulting augmentation problem couples discrete candidate-edge selection with continuous weight allocation under heterogeneous per-unit deployment costs, a total budget, and an exact-cardinality constraint. For a fixed weighted base graph, this yields a mixed-integer formulation  and a semidefinite relaxation whose optimum lower-bounds the mixed-integer optimum. We cast the relaxation as a cone program and solve it numerically using a homogeneous self-dual embedding and first-order operator splitting. Feasible discrete designs are recovered through rounding-and-repair procedures and assessed by \emph{a posteriori} gap estimates relative to the numerical semidefinite program (SDP)
benchmark.  As a scalable alternative, we develop an exact-$k$, budget-feasible greedy heuristic built on rank-one Laplacian updates and biharmonic-distance caching, and interpret its progress through a Bellman value-to-go benchmark with a conservative spectral lower bound on the local policy ratio. Experiments on synthetic and real infrastructure networks across graph sizes, budgets, weight distributions, and cost regimes show that, under fixed budgets, distance-proportional costs limit the achievable resistance reduction and shift installed conductance toward shorter links relative to uniform per-unit costs.

\end{abstract}

\begin{IEEEkeywords}
Budget-constrained graph augmentation, network robustness, Kirchhoff index, semidefinite relaxation, greedy edge selection.
\end{IEEEkeywords}

\IEEEpeerreviewmaketitle

\section{Introduction}

\IEEEPARstart{N}{}etworks model systems of relations and interactions across engineered and natural domains, from infrastructure and communications to biological and social organizations~\cite{newman2003structure}. Network robustness denotes the ability of the underlying graph to retain its functionality under structural constraints, failures, or exogenous perturbations. In practice, transportation systems require alternative paths to sustain throughput under edge failures \cite{yang2018designing}; communication and infrastructure networks motivate graph-based resilience metrics and link-addition strategies \cite{alenazi2015evaluation, wang2014improving}; and brain networks exhibit structural resilience that preserves functional connectivity under perturbations \cite{joyce2013human}. To quantify and improve robustness, prior work has developed graph-theoretic and spectral measures together with optimization methods based on them, including graph and spectral metrics \cite{ellens2013graphmeasuresnetworkrobustness, wang2014improving}, structural augmentation \cite{alenazi2015evaluation}, perturbation-based analysis \cite{ceci2018small, cattai2025robust}, and attack-vulnerability studies \cite{albert2000error}.

Among these measures, the Kirchhoff index (or total effective resistance) has emerged as an important graph metric \cite{devriendt2022effective,Minimizing_ER,alenazi2015comprehensive, wang2014improving, Kooij2023}. Its appeal stems from a robustness-oriented interpretation: many strongly weighted alternative paths bring two nodes effectively close, whereas sparse or weakly weighted connectivity pushes them apart. Unlike the shortest path, effective resistance accounts for the cumulative contribution of all routes between a pair of nodes~\cite{devriendt2022effective}. Formally, the Kirchhoff index equals $n$ times the sum of the reciprocals of the nonzero Laplacian eigenvalues \cite{Minimizing_ER}; this spectral form makes it a natural objective for robustness-oriented design. Direct evaluation through the trace formulation involves a Laplacian pseudoinverse, which can become the computational bottleneck on large graphs \cite{huang2025estimating, dwaraknath2023towards}, motivating scalable formulations.

Prior work on total effective resistance addresses several related problems. For a fixed topology, optimal weight allocation over a prescribed edge set has been characterized without edge augmentation~\cite{Minimizing_ER}. Since optimal link addition is \textsc{NP}-hard~\cite{Kooij2023}, work has focused on tractable methods, including efficient greedy selection~\cite{predari2023greedy} and, for unweighted graphs, greedy algorithms with submodularity-ratio guarantees and near-linear-time gradient-based methods~\cite{zhou2025efficient}. However, effective-resistance edge addition is not submodular in general, and its submodularity ratio can approach zero~\cite{achterberg2025non}. Scalable Kirchhoff-index estimation~\cite{huang2025estimating} and resistance-aware graph rewiring~\cite{Attali2025DynamicTG,black2023understanding} address related questions but not budgeted augmentation of a fixed weighted base graph. In air-transportation networks, routes have been added to reduce total effective resistance, but the added weights are treated as fixed inputs~\cite{yang2018designing}. Among these works, closest to our setting is an SDP formulation that jointly optimizes links and weights under a budget, but designs the network from scratch rather than augmenting a deployed weighted graph~\cite{yang2024robustness}. Table~\ref{tab:prior_work} summarizes the main distinctions.

\begin{table*}[!t]
\centering
\footnotesize
\setlength{\tabcolsep}{4pt}
\renewcommand{\arraystretch}{1.2}
\caption{Comparison with related work. \cmark{}: present, $\times$: absent, and $\circ$: limited. \textbf{Base}: augmentation of a fixed weighted base graph; \textbf{Wt}: candidate-edge weight optimization; \textbf{Cost}: explicit edge-cost model or budget constraint; \textbf{SDP}: semidefinite relaxation; \textbf{Rnd}: rounding; \textbf{Grdy}: greedy method.}
\label{tab:prior_work}
\begin{tabularx}{\textwidth}{@{}lcccccc>{\raggedright\arraybackslash}X@{}}
\toprule
\textbf{Work} & \textbf{Base} & \textbf{Wt} & \textbf{Cost} & \textbf{SDP} & \textbf{Rnd} & \textbf{Grdy} & \textbf{Main setting} \\
\midrule
\textbf{Ours} & \cmark & \cmark & \cmark & \cmark & \cmark & \cmark & Exact-$k$, cost-aware augmentation of a fixed weighted base graph; SDP relaxation, rounding, and cost-aware greedy \\
\cite{Growing_Graphs} & \pmark & \xmark & \pmark & \cmark & \cmark & \cmark & Unweighted cardinality-$k$ edge addition maximizing algebraic connectivity $\lambda_2$ \\
\cite{Minimizing_ER} & \pmark & \cmark & \pmark & \cmark & \xmark & \xmark & Convex/SDP conductance allocation on a fixed edge set; no edge addition  \\
\cite{yang2018designing} & \cmark & \xmark & \xmark & \cmark & \cmark & \cmark & Fixed-base route selection with fixed added-edge weights; cardinality $k$; minimizes total effective resistance \\
\cite{predari2023greedy} & \pmark & \xmark & \xmark & \xmark & \xmark & \cmark & Accelerated greedy unweighted cardinality-$k$ edge insertion minimizing total effective resistance \\
\cite{yang2024robustness} & \xmark & \cmark & \cmark & \cmark & \cmark & \pmark & From-scratch weighted SDP design under a route-cost budget; greedy used only for rounding \\
\cite{zhou2025efficient} & \pmark & \xmark & \xmark & \xmark & \xmark & \cmark & Greedy and gradient-based unweighted cardinality-$k$ edge addition minimizing the Kirchhoff index \\
\bottomrule
\end{tabularx}
\end{table*}

We consider augmentation of a pre-deployed undirected weighted base graph whose existing edges and weights remain fixed. We formulate a mixed-integer (MI) problem that jointly selects exactly $k$ candidate edges and assigns their weights under heterogeneous per-unit deployment costs and a total budget. We then relax the binary variables and use the standard Schur-complement SDP representation of the trace-inverse objective~\cite{Minimizing_ER}, yielding a convex program without an explicit Laplacian pseudoinverse.

The SDP relaxation can be solved by standard interior-point or augmented-Lagrangian methods, but their memory requirements grow rapidly with the semidefinite block. We instead cast the relaxation as a cone program and solve it using the homogeneous self-dual embedding (HSDE) ADMM framework of~\cite{o2016conic}. The relaxation itself lower-bounds the mixed-integer optimum, while the solver returns a finite-tolerance numerical solution. We recover feasible discrete designs through rounding and repair and compare the standard top-$k$ rule with support-based weight re-optimization and greedy recovery on the SDP support.

We also develop a cost-aware greedy augmentation algorithm that selects edges sequentially by their marginal decrease in total effective resistance. Biharmonic-distance caching enables constant-time evaluation of each candidate's marginal gain, while Sherman--Morrison rank-one updates maintain the shifted Laplacian inverse and its square after each addition~\cite{predari2023greedy, golub2013matrix}. The resulting designs are evaluated against the numerical SDP benchmark. We analyze the greedy method using a Bellman value-to-go benchmark over the reachable state space, obtaining a local progress and residual-decay characterization with a conservative spectral lower bound on the local policy ratio.

A preliminary conference version of this work introduced the core idea of budget-constrained graph augmentation, together with its SDP relaxation and a variant of the greedy algorithm~\cite{bhoite2026costaware}. This paper substantially extends that work, and its main contributions are as follows. We formulate exact-$k$ augmentation of a fixed weighted base graph as a joint edge-selection and weight-sizing problem under heterogeneous per-unit costs and a total budget. We derive an SDP relaxation that lower-bounds the mixed-integer optimum, characterize the relaxed optimum by a cost-normalized marginal-benefit proposition, and recover feasible discrete designs by rounding and repair with support-based weight re-optimization. We develop an exact-$k$, budget-feasible greedy heuristic based on rank-one inverse updates and biharmonic-distance caching, and analyze its progress relative to a Bellman value-to-go benchmark with a conservative spectral lower bound on the local policy ratio. We report an extensive experimental study on synthetic and real-world networks, covering solver behavior, the alternative rounding schemes, and the spatial footprint of the added edges. It shows that under fixed budgets distance-dependent costs limit the attainable reduction in total effective resistance and shift the added-edge weight distribution toward shorter links.

The remainder of the paper is organized as follows. Section~\ref{framework} formulates the augmentation problem and its SDP relaxation; Section~\ref{conic_and_adpt_relax} describes the conic solution approach. Section~\ref{budget_greedy} presents the greedy method and its analysis. Sections~\ref{sec:experimental-setup} and \ref{sec:results} present the experimental setup and results, respectively.

\textbf{\textit{Mathematical Notations.}}
Italic letters denote scalars, bold lowercase letters denote vectors, and bold uppercase letters denote matrices. For symmetric matrices $\mA$ and $\mB$, $\mA\succeq\mB$ ($\mA\succ\mB$) means that $\mA-\mB$ is positive semidefinite (positive definite). We write $\Tr(\cdot)$ for the matrix trace, $(\cdot)^\top$ for transpose, and $[\va]_i$ for the $i$th entry of $\va$. The set $\mathbb S^n$ contains the real symmetric $n\times n$ matrices, with $\mathbb S_+^n$ and $\mathbb S_{++}^n$ denoting its positive semidefinite and positive definite subsets, respectively. The sets $\mathbb R_{\ge0}^n$ and $\mathbb R_{>0}^n$ denote the nonnegative and strictly positive orthants. The symbols $\mI$, $\vone$, and $\vzero$ denote the identity matrix, all-ones vector, and all-zeros vector of compatible dimensions.

\section{Problem Formulation and SDP Relaxation}\label{framework}
We consider an undirected base graph $\mathcal G(\mathcal{V},\Eb,\mA_{\mathrm b})$ with node set $\mathcal{V}=\{v_1,\dots,v_n\}$ and base edge set $\Eb=\{e_1,\dots,e_m\}$. Each edge $e$ is an unordered pair $\{v_i, v_j\}$ with $i \neq j$. The symmetric weighted adjacency matrix $\mA_{\mathrm b}$ represents the strengths of existing links and is fixed throughout. Let $\mD_{\mathrm b}$ be the corresponding degree matrix and $\mLb = \mD_{\mathrm b} - \mA_{\mathrm b} \in \mathbb{S}^n_{+}$ the base Laplacian. We select $p$ candidate edges from the $\frac{n(n-1)}{2}-m$ non-base node pairs, forming $\Ec=\{e_{m+1},\dots,e_{m+p}\},$ disjoint from $\Eb$. For each candidate $e=\{v_i,v_j\}$, we fix an arbitrary orientation and define $\va_e\in\mathbb R^n$ by $[\va_e]_i=1$, $[\va_e]_j=-1$, and zero elsewhere. The orientation does not affect $\va_e\va_e^\top$. Each candidate edge has a nonnegative design weight $w_e\ge0$ representing its strength, such as  conductance, bandwidth, or inverse delay.

The augmentation Laplacian is 
\begin{equation}
\mLa(\vw)=\sum_{e\in \Ec} w_e\,\va_e\va_e^\top, \qquad \mLa(\vw) \in \mathbb{S}^{n}_{+}.
\label{eq:La}
\end{equation}
The augmented graph has Laplacian \(\mL(\vw)=\mLb+\mLa(\vw).\) We assume the base graph is connected; hence \(\mM(\vw) \coloneqq \mL(\vw)+\vone\vone^\top/n\succ0\,,\) for every $\vw \ge \vzero$~\cite{sundin2017connectedness}. Its total effective resistance is~\cite{Minimizing_ER}
\begin{equation}
R_{\mathrm{tot}}(\vw)=n\,\operatorname{Tr}\!\Big(\mM^{-1}(\vw)\Big)-n\;.
\end{equation}

Each candidate edge $e$ has a per-unit deployment cost $c_e > 0$, collected in $\vc \in \mathbb R^p$, reflecting factors such as length, terrain, or installation overhead. Given a budget $C>0$, a target cardinality $k\in\{1,\dots,p\}$, and weight bounds $0<w_{\min}\le w_{\max}$, we introduce binary selection variables $\vz\in\{0,1\}^p$ and continuous weights $\vw\in\mathbb R_{\ge0}^p$. The joint edge-selection and weight-sizing problem is
\begin{equation*}
\begin{aligned}
\underset{\vw,\,\vz}{\mathrm{minimize}}\quad &
R_{\mathrm{tot}}(\vw) 
\\
\mathrm{subject\ to}\quad &
\vc^\top \vw \le C, \hspace{120pt}(\mathcal{P}0)\\
& z_i w_{\min} \le w_i \le z_i w_{\max},\quad i=1,\dots,p,\\
& \vone^\top \vz = k,\qquad z_i\in\{0,1\}
\end{aligned}
\end{equation*}
with optimal value denoted by $R_{\mathrm{tot}}^{\mathrm{MI}}$.

The budget constraint limits the total deployment cost. The selection-induced bounds enforce $w_i=0$ when $z_i=0$ and $w_i\in[w_{\min},w_{\max}]$ when $z_i=1$, while $\vone^\top\vz=k$ requires exactly $k$ added edges. Since these bounds link $\vz$ and $\vw$, the augmentation Laplacian $\mLa(\vw)$ need not depend explicitly on $\vz$. Setting $w_{\min}=w_{\max}$ and using uniform costs recovers the effective-resistance link-addition problem as a special case, which is \textsc{NP}-hard~\cite{Kooij2023}. 

We relax the binary variables from $\vz\in\{0,1\}^p$ to $\vz\in[0,1]^p$. Direct evaluation of the resulting trace-inverse objective in an iterative solver requires repeated factorizations or linear solves with $\mM(\vw)$, with an $\mathcal O(n^3)$ worst-case cost in dense arithmetic~\cite{golub2013matrix}. To avoid explicit matrix inversion while preserving convexity, we introduce an auxiliary matrix $\mR\succeq0$ and apply the standard Schur-complement lift~\cite{Minimizing_ER}, yielding
\vspace{1em}
$\begin{aligned}
\underset{\mR,\,\vw,\,\vz}{\mathrm{minimize}}\quad &
n\,\operatorname{Tr}(\mR) - n \\[-2pt]
\mathrm{subject\ to}\quad &
\vc^\top \vw \le C, \hspace{120pt} (\mathcal{P}1) \\
& z_i w_{\min} \le w_i \le z_i w_{\max}, \quad i=1,\dots,p, \\
& \vone^\top \vz = k, \quad 0 \le z_i \le 1, \ i=1,\dots,p, \\
& \begin{pmatrix}
\mM(\vw) & \mI \\
\mI & \mR
\end{pmatrix} \succeq 0, \quad \mR \in \mathbb{S}_+^{n}
\end{aligned} $

with optimal value denoted by $R_{\mathrm{tot}}^{\mathrm{SDP}}$.

\begin{proposition}\label{proposition_1}
Assume the base graph is connected, so that $\mM(\vw)\succ 0$ for every feasible $\vw$.
Then 
$R_{\mathrm{tot}}^{\mathrm{SDP}} \le R_{\mathrm{tot}}^{\mathrm{MI}}$,
and every optimum $(\mR^\star,\vw^\star,\vz^\star)$ of $(\mathcal{P}1)$ satifies \(\mR^\star \;=\; \mM^{-1}(\vw^\star).\)
Thus, $(\mathcal{P}1)$ lower-bounds the mixed-integer optimum $R_{\mathrm{tot}}^{\mathrm{MI}}$, and its difference from the objective of any feasible discrete design upper-bounds that design's suboptimality.
\end{proposition}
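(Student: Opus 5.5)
The plan has three parts: use the Schur complement to reduce $(\mathcal{P}1)$ to a relaxation of $(\mathcal{P}0)$, obtain the bound by embedding mixed-integer feasible points, and then read off $\mR^\star=\mM^{-1}(\vw^\star)$ from trace minimization.

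\textbf{Step 1 (Schur complement).} Fix any $\vw$ satisfying the constraints of $(\mathcal{P}1)$. Then $\vw\ge\vzero$, since $z_i w_{\min}\le w_i$ with $z_i\ge0$. By the connectivity assumption, $\mM(\vw)\succ0$. Because the $(1,1)$ block is positive definite, the Schur-complement lemma says the block matrix is positive semidefinite if and only if $\mR-\mM^{-1}(\vw)\succeq0$. The constraint $\mR\in\mathbb S^n_+$ is then implied automatically, because $\mM^{-1}(\vw)\succ0$. Hence, for fixed $(\vw,\vz)$, the feasible set in $\mR$ is exactly $\{\mR:\mR\succeq\mM^{-1}(\vw)\}$. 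It is nonempty, since $\mR=\mM^{-1}(\vw)$ belongs to it.

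\textbf{Step 2 (lower bound).} Take any feasible $(\vw,\vz)$ of $(\mathcal{P}0)$. Since $\{0,1\}^p\subseteq[0,1]^p$, it satisfies every non-semidefinite constraint of $(\mathcal{P}1)$. Completing it with $\mR=\mM^{-1}(\vw)$ gives a point that is feasible for $(\mathcal{P}1)$ by Step~1. Its objective value is $n\Tr(\mM^{-1}(\vw))-n=\Rtot(\vw)$. Taking the infimum over all feasible points of $(\mathcal{P}0)$ gives $R_{\mathrm{tot}}^{\mathrm{SDP}}\le R_{\mathrm{tot}}^{\mathrm{MI}}$.

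\textbf{Step 3 (optimal $\mR$).} Let $(\mR^\star,\vw^\star,\vz^\star)$ be optimal for $(\mathcal{P}1)$. By Step~1, $\Delta\coloneqq\mR^\star-\mM^{-1}(\vw^\star)\succeq0$. The point $(\mM^{-1}(\vw^\star),\vw^\star,\vz^\star)$ is also feasible for $(\mathcal{P}1)$. Its objective is smaller than the optimum by $n\Tr(\Delta)$, and $\Tr(\Delta)\ge0$. Optimality therefore forces $\Tr(\Delta)=0$. A positive semidefinite matrix with zero trace has all eigenvalues equal to zero, so $\Delta=0$, which gives $\mR^\star=\mM^{-1}(\vw^\star)$.

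\textbf{Step 4 (optimality gap).} For any feasible discrete design $\hat\vw$, we have $\Rtot(\hat\vw)-R_{\mathrm{tot}}^{\mathrm{MI}}\le \Rtot(\hat\vw)-R_{\mathrm{tot}}^{\mathrm{SDP}}$, which is the claimed suboptimality bound.

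The only delicate point is Step~1. It needs strict positive definiteness of $\mM(\vw)$, so that the Schur complement is stated with a true inverse. This must hold on the relaxed feasible set, not only on the mixed-integer one. It does, because the relaxed constraints still force $\vw\ge\vzero$, so $\mL(\vw)\succeq\mLb$, and adding $\vone\vone^\top/n$ to the Laplacian of a connected graph yields a positive definite matrix. I would also note that existence of an optimum is assumed in the statement ("every optimum"). The feasible set in $(\vw,\vz)$ is compact, so existence could be argued if needed, but the claim does not require it.
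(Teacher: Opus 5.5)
Your proposal is correct and follows the same route as the paper. The Schur complement reduces the LMI to $\mR\succeq\mM^{-1}(\vw)$, the bound $R_{\mathrm{tot}}^{\mathrm{SDP}}\le R_{\mathrm{tot}}^{\mathrm{MI}}$ comes from the relaxation enlarging the feasible set, and a nonzero positive semidefinite difference $\mR^\star-\mM^{-1}(\vw^\star)$ would have positive trace, contradicting optimality. Your explicit lift of mixed-integer points via $\mR=\mM^{-1}(\vw)$, and your check that $\vw\ge\vzero$ (hence $\mM(\vw)\succ 0$) on the relaxed feasible set, make precise what the paper's shorter argument leaves implicit.
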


\begin{proof}[Proof]
Relaxing $\vz\in\{0,1\}^p$ to $\vz\in[0,1]^p$ enlarges the feasible set of $(\mathcal{P}0)$, and hence  $R_{\mathrm{tot}}^{\mathrm{SDP}} \le R_{\mathrm{tot}}^{\mathrm{MI}}$. By the Schur complement \cite{Boyd_Vandenberghe_2004,Semidefinite_programming}, the linear matrix inequality (LMI):
$\bigl[\begin{smallmatrix} \mM(\vw) & \mI\\ \mI & \mR \end{smallmatrix}\bigr]\succeq0$ is equivalent to $\mR\succeq \mM^{-1}(\vw)$.
For fixed feasible $\vw$, any feasible $\mR$ therefore satisfies $\mR-\mM^{-1}(\vw)\succeq0$. If the difference is nonzero, its trace is positive, so replacing $\mR$ by $\mM^{-1}(\vw)$ strictly decreases the objective. Hence, $\mR^\star=\mM^{-1}(\vw^\star)$ at every optimum, recovering the trace-inverse objective as in~\cite{Minimizing_ER}.
\end{proof}

The derivative of total effective resistance and the fixed-topology
KKT condition are established in~\cite[Sec.~4.2]{Minimizing_ER}.
The following proposition adapts them to the heterogeneous costs and
selection-induced bounds of $(\mathcal P1)$.

\begin{proposition}[Cost-normalized marginal-benefit condition]
\label{thm:equal-marginal}
For the relaxed problem $(\mathcal P1)$, define the
cost-normalized marginal benefit of candidate edge $e\in\Ec$ by
\begin{equation}\label{eq:MB}
\Theta_e(\vw)
\coloneqq
\frac{n}{c_e}\|\mM^{-1}(\vw)\va_e\|_2^2
=
-\frac{1}{c_e}
\frac{\partial\Rtot(\vw)}{\partial w_e}.
\end{equation}
Then $\Theta_e(\vw)>0$ for every
$\vw\in\Real_{\ge0}^p$ and $e\in\Ec$.

Let $(\vw^\star,\vz^\star)$ be a KKT point of the reduced
formulation obtained by eliminating $\mR$ via
Proposition~\ref{proposition_1}, and define the set of candidates
whose selection-induced weight bounds are inactive:
\[
S_{\mathrm{int}}^\star
\coloneqq
\{e\in\Ec:
z_e^\star w_{\min}<w_e^\star<z_e^\star w_{\max}\}.
\]
For every $e\in S_{\mathrm{int}}^\star$,
\[
\Theta_e(\vw^\star)=\mu,
\]
where $\mu$ is the multiplier of 
$\vc^\top\vw\le C$. If $S_{\mathrm{int}}^\star\neq\emptyset$,
then $\mu>0$ and the budget is active:
$\vc^\top\vw^\star=C$.
\end{proposition}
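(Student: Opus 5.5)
The plan is to prove the three claims in turn: the derivative formula with positivity, the stationarity condition on $S_{\mathrm{int}}^\star$, and then $\mu>0$ with an active budget.

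\textbf{Derivative and positivity.} Since $\mM(\vw)=\mLb+\sum_{e}w_e\va_e\va_e^\top+\vone\vone^\top/n$ is affine in $\vw$, we have $\partial\mM/\partial w_e=\va_e\va_e^\top$. By Proposition~\ref{proposition_1}, $\mM(\vw)\succ0$ on $\Real_{\ge0}^p$, so $\mM^{-1}$ is differentiable there, with $\partial\mM^{-1}/\partial w_e=-\mM^{-1}\va_e\va_e^\top\mM^{-1}$. Taking traces and using cyclicity gives
\[
\partial\Rtot/\partial w_e=-n\,\va_e^\top\mM^{-2}\va_e=-n\|\mM^{-1}\va_e\|_2^2 ,
\]
where symmetry of $\mM^{-1}$ is used. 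This is exactly \eqref{eq:MB}. For positivity, note that $\va_e\neq\vzero$ and $\mM^{-1}$ is nonsingular, so $\mM^{-1}\va_e\neq\vzero$. Since $c_e>0$, it follows that $\Theta_e>0$.

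\textbf{KKT stationarity.} After eliminating $\mR$ via Proposition~\ref{proposition_1}, the reduced problem minimizes the smooth function $\Rtot(\vw)$ subject to linear constraints:
\begin{itemize}
\item the budget $\vc^\top\vw\le C$, with multiplier $\mu\ge0$;
\item the lower bounds $z_ew_{\min}-w_e\le0$, with multipliers $\alpha_e\ge0$;
\item the upper bounds $w_e-z_ew_{\max}\le0$, with multipliers $\beta_e\ge0$;
\item the cardinality equality $\vone^\top\vz=k$, with a free multiplier $\nu$;
\item the box $0\le z_e\le1$, with its own multipliers.
\end{itemize}
Nonnegativity of $\vw$ is implied by the bounds, because $z_e\ge0$. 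The $w_e$-component of Lagrangian stationarity reads
\[
\partial\Rtot/\partial w_e+\mu c_e-\alpha_e+\beta_e=0 .
\]
For $e\in S_{\mathrm{int}}^\star$ both bound constraints are strictly inactive, so complementary slackness forces $\alpha_e=\beta_e=0$. Hence $\mu c_e=-\partial\Rtot/\partial w_e$, i.e., $\Theta_e(\vw^\star)=\mu$. The $\vz$-stationarity equations involve only $\alpha,\beta,\nu$ and the box multipliers, so they do not interfere with this component. Because all constraints are linear, the KKT conditions are valid at a local optimum without any further constraint qualification; in the statement they are simply assumed.

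\textbf{Active budget.} If $S_{\mathrm{int}}^\star\ne\emptyset$, pick $e$ in it. Then $\mu=\Theta_e(\vw^\star)>0$ by the first part. Complementary slackness $\mu(\vc^\top\vw^\star-C)=0$ then yields $\vc^\top\vw^\star=C$.

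The main point requiring care is setting up the reduced Lagrangian correctly. In particular, the bound constraints are bilinear-looking but are actually linear in $(\vw,\vz)$ jointly. One must also confirm that the inactive bounds at $e\in S_{\mathrm{int}}^\star$ kill exactly the multipliers appearing in the $w_e$-equation. The rest is routine matrix calculus.
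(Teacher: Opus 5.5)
Your proposal is correct and follows the paper's proof step for step. You differentiate $\Rtot$ using $\partial\mM/\partial w_e=\va_e\va_e^\top$, obtain positivity from $\mM^{-1}\va_e\neq\vzero$ and $c_e>0$, and then use complementary slackness on $S_{\mathrm{int}}^\star$ to eliminate both bound multipliers in the $w_e$-stationarity equation, which yields $\Theta_e(\vw^\star)=\mu>0$ and an active budget. The only differences are that you derive the derivative directly from $\partial\mM^{-1}/\partial w_e=-\mM^{-1}\va_e\va_e^\top\mM^{-1}$ where the paper cites it, and that you list the remaining multipliers explicitly; neither affects the argument.
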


\begin{proof}
Using the derivative formula of~\cite[Sec.~4.2]{Minimizing_ER}
to $\mM(\vw)$, and 
$\partial\mM(\vw)/\partial w_e=\va_e\va_e^\top$ gives
\[
\frac{\partial\Rtot(\vw)}{\partial w_e}
=
-n\,\va_e^\top\mM^{-2}(\vw)\va_e
=
-n\,\|\mM^{-1}(\vw)\va_e\|_2^2.
\]
Since $\mM^{-2}(\vw)\succ0$, $\va_e\neq\vzero$, and $c_e>0$,
Eq.~\eqref{eq:MB} follows and $\Theta_e(\vw)>0$.

Let $\mu\ge0$, $\alpha_e\ge0$, and $\gamma_e\ge0$ denote the
multipliers of the budget constraint and the lower and upper
selection-induced weight bounds, respectively. The $w_e$-stationarity
condition is
\[
\frac{\partial\Rtot(\vw^\star)}{\partial w_e}
+\mu c_e-\alpha_e+\gamma_e=0.
\]
For $e\in S_{\mathrm{int}}^\star$, both weight bounds
are strictly slack, so complementary slackness gives
$\alpha_e=\gamma_e=0$. Dividing the resulting stationarity
condition by $c_e>0$ and using~\eqref{eq:MB} yields
$\Theta_e(\vw^\star)=\mu$. If
$S_{\mathrm{int}}^\star\neq\emptyset$, positivity of
$\Theta_e(\vw^\star)$ gives $\mu>0$, and budget complementary
slackness then gives $\vc^\top\vw^\star=C$.
\end{proof}

To recover an integer-feasible design for $(\mathcal P0)$, we apply a top-$k$ \emph{rounding-and-repair} (RND) procedure to the relaxed solution $(\vw^\star,\vz^\star)$ of $(\mathcal P1)$. We select the $k$ largest entries of $\vz^\star$, breaking ties by smaller unit costs $c_i$, to obtain $\widehat{\vz}\in\{0,1\}^p$ with $\vone^\top\widehat{\vz}=k$. If $\vc^\top(w_{\min}\widehat{\vz})>C$, no feasible weights exist on this support, and no rounded upper bound is obtained for the given $(C,k)$ pair. Otherwise, we clip the relaxed weights to the box defined by $\widehat{\vz}$:
\[
\widetilde w_i
=
\min\{\max(w_i^\star,w_{\min}\widehat z_i),
       w_{\max}\widehat z_i\}.
\]
If $\vc^\top\widetilde{\vw}\le C$, we set $\widehat{\vw}=\widetilde{\vw}$. Otherwise, we set
\[
\widehat{\vw} = w_{\min}\widehat{\vz} + \alpha\bigl(\widetilde{\vw}-w_{\min}\widehat{\vz}\bigr),
\qquad \alpha = \frac{C-\vc^\top(w_{\min}\widehat{\vz})}{\vc^\top\widetilde{\vw}-\vc^\top(w_{\min}\widehat{\vz})}.
\]
Under the preceding feasibility condition, $\alpha\in[0,1]$ and $\vc^\top\widehat{\vw}=C$. Hence, $(\widehat{\vw},\widehat{\vz})$ is feasible for $(\mathcal P0)$ and provides the upper bound used in the gap certificate below.

\begin{remark}
For any feasible rounded pair $(\widehat{\vw},\widehat{\vz})$, 
\[R_{\mathrm{tot}}^{\mathrm{SDP}}=\mathrm{val}(\mathcal{P}1)\ \le\ \mathrm{val}(\mathcal{P}0)=R_{\mathrm{tot}}^{\mathrm{MI}}\ \le\ R_{\mathrm{tot}}(\widehat{\vw}) \eqqcolon R_{\mathrm{tot}}^{\mathrm{RND}}.\]
Thus
$R_{\mathrm{tot}}^{\mathrm{RND}}
-R_{\mathrm{tot}}^{\mathrm{SDP}}$
upper-bounds the absolute suboptimality of the rounded design. We
report the normalized certificate
\[
\frac{
R_{\mathrm{tot}}^{\mathrm{RND}}
-R_{\mathrm{tot}}^{\mathrm{SDP}}
}{
R_{\mathrm{tot}}^{\mathrm{SDP}}
}.
\]
The certificate is available only when rounding and repair produce
a feasible pair. If
$\vc^\top(w_{\min}\widehat{\vz})>C$, the selected support cannot
satisfy the lower weight bounds and therefore yields no feasible
upper bound for that $(C,k)$ pair.
\label{remark_1}
\end{remark}

Next, we reformulate $(\mathcal{P}1)$ as a standard cone program and solve it using an operator–splitting method.

\section{Conic optimization via operator splitting}\label{conic_and_adpt_relax}

We solve $(\mathcal{P}1)$ using the standard HSDE--ADMM conic framework of~\cite{o2016conic}; the solver itself is not a contribution of this work. This section specifies how the SDP relaxation is cast into the required conic form. Applying the scaled vectorization $\svec(\cdot)$ to symmetric matrices, with off-diagonal entries scaled by $\sqrt{2}$ to preserve the trace inner product~\cite{wolkowicz2012handbook}, gives:
\begin{equation}
\begin{array}{llrl}
\underset{\vx}{\mathrm{minimize}} & \vq^\top \vx
&  & \hspace{2em} (\mathcal{P}2) \\
\mathrm{subject\ to} & \mF\vx + \vs = \vb\,, & (\vx,\vs) \in \mathbb{R}^{N_{\mathrm{x}}} \times \mathcal{K} &  \\
\end{array}
\notag
\end{equation}
Here, $\vx=[\svec(\mR)^\top\;\vw^\top\;\vz^\top]^\top$, $\vq=[n\svec(\mI)^\top\;\vzero_p^\top\;\vzero_p^\top]^\top$, and $N_{\mathrm{x}}=n(n+1)/2+2p$, so $\vq^\top\vx=n\Tr(\mR)$. The constant $-n$ in $(\mathcal{P}1)$ is omitted here and restored when reporting $R_{\mathrm{tot}}^{\mathrm{SDP}}$.

After $\svec(\cdot)$, the block LMI contributes the semidefinite constraint block $\mF_{\mathrm{SDP}}\vx+\vs_{\mathrm{SDP}}=\vb_{\mathrm{SDP}}$, where $\vs_{\mathrm{SDP}}$ is the vectorized slack associated with a matrix in  $\mathbb{S}^{2n}_{+}$. The matrix $\mF_{\mathrm{SDP}}$ contains the coefficients of $\svec(\mR)$ and $\vw$, with zero columns for $\vz$, and $\vb_{\mathrm{SDP}}$ contains the constant part. The budget, selection-induced weight bounds, and relaxed-selection bounds are represented by nonnegative slack blocks, while $\vone^\top\vz=k$ is represented by a zero-cone block.

Stacking the blocks gives $\mF=[\mF_{\mathrm{SDP}}^\top\;\mF_{\mathrm{cost}}^\top\;\mF_{\mathrm{box}}^\top\;\mF_{\mathrm{card}}^\top]^\top$, $\vb=[\vb_{\mathrm{SDP}}^\top\;C\;\vb_{\mathrm{box}}^\top\;k]^\top$, and $\vs=[\vs_{\mathrm{SDP}}^\top\;\vs_{\mathrm{cost}}^\top\;\vs_{\mathrm{box}}^\top\;\vs_{\mathrm{card}}^\top]^\top\in\mathcal K$, where $N_{\mathrm c}$ is the dimension of $\vb,\vs$, and $\vy$. The product cone is $\mathcal K=\mathbb{S}^{2n}_{+}\times\mathbb{R}_{+}\times(\mathbb{R}_{+}^{p})^4\times\{0\}$. Each candidate edge contributes one sparse rank-one coefficient in the LMI through $\mLa$, while $k$ appears only in the right-hand side of $\vone^\top\vz=k$. Thus the assembled $\mF$ has $\mathcal O(n^2+p)$ nonzeros; forming $\vb$ and $\vq$ costs $\mathcal O(n^2)$ and $\mathcal O(n)$, respectively. The total conic-data assembly cost is therefore $\mathcal O(n^2+p)$, independent of the chosen cardinality $k$.

We solve $(\mathcal{P}2)$ using the HSDE--ADMM scheme of~\cite{o2016conic}. In the embedding, the optimality conditions of $(\mathcal{P}2)$ are represented as $\vv=\mQ\vu$ with $\vu=(\vx,\vy,\tau)\in \mathcal{C}\coloneqq\mathbb{R}^{N_{\mathrm{x}}}\times\mathcal K^*\times\mathbb{R}_+$ and $\vv=(\vr,\vs,\kappa)\in\mathcal{C}^*\coloneqq\{0\}^{N_{\mathrm{x}}}\times\mathcal K\times\mathbb{R}_+$, where
\[
\mQ=\begin{bmatrix}
0 & \mF^\top & \vq\\
-\mF & 0 & \vb\\
-\vq^\top & -\vb^\top & 0
\end{bmatrix}.
\]
The variables $\tau,\kappa\ge0$ distinguish solutions from infeasibility certificates: if $\tau>0,\kappa=0$, then $(\vx/\tau,\vy/\tau,\vs/\tau)$ satisfies the KKT conditions of $(\mathcal{P}2)$; if $\tau=0,\kappa>0$, the embedding yields a primal or dual infeasibility certificate~\cite[Sec.~2.3]{o2016conic}. Applying ADMM to this embedding gives the standard iteration of~\cite[Eq.~(17)]{o2016conic}, summarized in Algorithm~\ref{alg:hsde}.

\begin{algorithm}[t]
\caption{HSDE-ADMM}
\label{alg:hsde}
\KwIn{$\mF,\,\vb,\,\vq,\,\Kcone$ and the induced $\mQ,\Cset$; tolerance $\varepsilon\!>\!0$; max.\ iterations $T$}
\KwOut{ $(\vx^{\star},\vy^{\star})$ \textit{or} infeasibility status}
\textbf{Initialization:} $\vu^{0}\leftarrow[\vzero;\,1],\;\; \vv^{0}\leftarrow[\vzero;\,1]$\;
\For{$t = 0, 1, \ldots, T-1$}{
     $\vomega^{t} \leftarrow \vu^{t} + \vv^{t}$  \Comment{primal sum}\;
     $\tilde\vu^{t+1} = (\mI+\mQ)^{-1}\,\vomega^{t}$  \Comment{linear system}\label{ln:affine}\;
     $\vu^{t+1} \leftarrow \proj_{\Cset}\,\bigl(\tilde\vu^{t+1}-\vv^{t}\bigr)$ \Comment{cone projection}\label{ln:cone}\;
     $\vv^{t+1} \leftarrow \vv^{t}-\tilde\vu^{t+1}+\vu^{t+1}$ \Comment{dual update}\label{ln:dual}\;
     $r_t\gets \dfrac{\| \mtrx{Q}\vu^{t+1}-\vv^{t+1}\|_2}{\max\{1,\,\|\vu^{t+1}\|_2,\,\|\vv^{t+1}\|_2\}} $ \Comment{relative residual}\label{ln:res}\;
    \If{$r_t < \varepsilon$}{
        \textbf{break}
    }
}
$\tau \leftarrow \vu^{t+1}_{N_{\mathrm{x}}+N_{\mathrm{c}}+1},\;\; \kappa \leftarrow \vv^{t+1}_{N_{\mathrm{x}}+N_{\mathrm{c}}+1}$\;
\If{$\tau > \varepsilon$ \textbf{and} $\kappa < \varepsilon$}
    {\textbf{return} \, $\{\vx^{\star},\, \vy^{\star}\} = \left\{\dfrac{\vu^{t+1}_{1:N_{\mathrm{x}}}}{\tau},\, \dfrac{\vu^{t+1}_{N_{\mathrm{x}}+1:N_{\mathrm{x}}+N_{\mathrm{c}}}}{\tau}  \right\}$}
\ElseIf{$\tau < \varepsilon$ \textbf{and} $\kappa > \varepsilon$}{
     \Return \textsc{infeasible}\;
}
\Else{
     \Return \textsc{unknown}\;
}
\end{algorithm}

The subspace-projection step in line~\ref{ln:affine} requires solving a fixed linear system. In a direct implementation, an equivalent sparse quasi-definite system is factorized once using a permuted $LDL^\top$ factorization, and the factors are reused through forward and backward substitutions at subsequent iterations \cite[Sec.~4.1]{o2016conic}. The factorization and solve costs depend on the fill-in produced by the sparsity pattern and ordering.

The cone projection in line~\ref{ln:cone} decomposes across the cone blocks. Its dominant component is projection onto $\mathbb S_+^{2n}$, which requires an eigendecomposition of the $2n\times2n$ block and has worst-case cost $\mathcal O(n^3)$. The remaining orthant and free-coordinate operations cost $\mathcal O(p)$, while the update in line~\ref{ln:dual} costs $\mathcal O(N_{\mathrm{x}}+N_{\mathrm c}) =\mathcal O(n^2+p)$.

The HSDE is homogeneous: scaling a solution $(\vu,\vv)$ by $\psi>0$ gives the same recovered primal--dual point or infeasibility certificate~\cite[Sec.~2.3]{o2016conic}. The raw embedding residual $\|\mQ\vu^t-\vv^t\|_2$ therefore scales with the iterate magnitude and is not a suitable absolute stopping measure. We instead use the normalized residual in line~\ref{ln:res} and stop when $r_t<\varepsilon$. The unit floor prevents division by a small normalizer during early iterations. The residual $r_t$ measures embedding feasibility rather than the recovered KKT accuracy directly. Section~\ref{sec:results} therefore also reports the primal, dual, and duality-gap residuals for instances N$1$ to N$4$ (Fig.~\ref{fig:solver_convergence} and Table~\ref{tab:solver_convergence}).

\begin{table}[!h]
  \centering
  \footnotesize
  \renewcommand{\arraystretch}{1.08}
  \caption{Problem-size settings of the experimental instances N$1$ to N$4$ (Section~\ref{sec:experimental-setup}).}
  \label{tab:problem-sizes}
  \begin{tabular*}{\columnwidth}{@{\extracolsep{\fill}}lrrrr@{}}
    
    \toprule
    & \textbf{small} & \textbf{medium} & \textbf{large} & \textbf{xlarge} \\
    \midrule
    Instance label & N$1$ & N$2$ & N$3$ & N$4$ \\
    Nodes $n$ & $50$ & $100$ & $500$ & $1{,}000$ \\
    Base edges $|\Eb|$ & $150$ & $300$ & $1{,}500$ & $3{,}000$ \\
    Candidate edges $p=|\Ec|$ & $83$ & $178$ & $926$ & $1{,}850$ \\
    \addlinespace[2pt]
    Primal variables $N_{\mathrm{x}}$ & $1{,}441$ & $5{,}406$ & $127{,}102$ & $504{,}200$ \\
    Inequality constraints & $333$ & $713$ & $3{,}705$ & $7{,}401$ \\
    PSD block size $2n$ & $100$ & $200$ & $1{,}000$ & $2{,}000$ \\
    $\svec$ dim.\ of PSD block & $5{,}050$ & $20{,}100$ & $500{,}500$ & $2{,}001{,}000$ \\
    Total constraint rows & $5{,}384$ & $20{,}814$ & $504{,}206$ & $2{,}008{,}402$ \\
    \addlinespace[2pt]
    Nonzeros in $F$ & $2{,}188$ & $7{,}008$ & $135{,}436$ & $520{,}850$ \\
    
    \bottomrule
  \end{tabular*}
\end{table}

\textbf{Scaling.} Table~\ref{tab:problem-sizes} shows the dimensions of the four conic formulations. The direct HSDE implementation reuses a fixed
sparse factorization, whereas an interior-point method typically forms and factors an iteration-dependent KKT or Schur-complement system at each Newton step, making memory and factorization costs restrictive for the largest instances~\cite{nesterov_interior}. For N$4$, the dominant HSDE operation is the eigendecomposition of the $2{,}000\times2{,}000$ PSD block. Section~\ref{sec:results} reports the observed iteration counts, residuals, and runtimes.

\section{Budgeted Greedy Augmentation via Rank-One Updates}\label{budget_greedy}

{\setlength{\textfloatsep}{4pt plus 1pt minus 1pt}
\begin{algorithm}[!t]
\caption{Greedy Budgeted Weight Optimization}
\label{alg:greedy}
\KwIn{Shifted Laplacian: $\mM'_0$; candidate edge set $\Ec$, associated edge costs $\{c_e\}$; target cardinality $k$; weight bounds $[w_{\min}, w_{\max}]$; budget $C$.}
\KwOut{Augmented graph $\mM'$ with exactly $k$ added edges whenever $(C,k)$ is feasible; otherwise no exact-$k$ feasible design is reported.}
\textbf{Initialization:} $\Eused^{(0)} \leftarrow \emptyset$, $\Erem^{(0)} \leftarrow \Ec$, $C_{\mathrm{rem}}^{(0)} \leftarrow C$ \, form $(\mM'_0)^{-1}$ and $\bigl((\mM'_0)^{-1}\bigr)^{2}$\;
\If{$|\Ec| < k$ \textbf{ or } $C < w_{\min}\sum_{i=1}^{k} c_{(i)}$}{
    $\mathrm{return}$ \Comment{$(C,k)$ infeasible; no exact-$k$ design}\;
}
\For{$l = 1,\dots, k$}{
    $k_{\mathrm{rem}}^{(l-1)} \leftarrow k - |\Eused^{(l-1)}|$\;
    $r \leftarrow k_{\mathrm{rem}}^{(l-1)}$\;
    $s \leftarrow \dfrac{C_{\mathrm{rem}}^{(l-1)}}{r}$ \Comment{Uniform spend}\;
    $\Delta R_{\mathrm{tot}}^{\mathrm{best}} \leftarrow 0$ \Comment{Best marginal gain}\;
    $ e^{\star}  \leftarrow \emptyset$ \Comment{Greedy edge} \; $w_{e^{\star}}^{\star} \leftarrow 0$ \Comment{Greedy edge weight}\;
    \For{$e = \{v_i,v_j\} \in \Erem^{(l-1)}$}{
        $R_e \leftarrow w_{\min}\sum_{i=1}^{r-1} c_{(i)}$ \Comment{Reserve; $c_{(i)}$ $i$-th cheapest in $\Erem\!\setminus\!\{e\}$}\;
        \If{$C_{\mathrm{rem}}^{(l-1)} - c_e\,w_{\min} < R_e$}{
            $\mathrm{continue}$ \Comment{$\mathrm{Inadmissible}$}\;
        }
        $w_e \leftarrow \mathrm{Eq}.~\eqref{weight_greedy}$ \Comment{Reserve-capped trial weight}\;
        $
            r_e  \leftarrow   \left[(\mM'_{l-1})^{-1}\right]_{(v_i,v_i)} + \left[(\mM'_{l-1})^{-1}\right]_{(v_j,v_j)}
        $
        $
            \hspace{5em}-2\,\left[(\mM'_{l-1})^{-1}\right]_{(v_i,v_j)}\;
        $\;
        $
            b_e^{2}\leftarrow
            \left[(\mM'_{l-1})^{-2}\right]_{(v_i,v_i)}
        $
        \[
            \hspace{.3em}+ \left[(\mM'_{l-1})^{-2}\right]_{(v_j,v_j)}- 2\,\left[(\mM'_{l-1})^{-2}\right]_{(v_i,v_j)};
        \]\DontPrintSemicolon\;
        \PrintSemicolon
        \vspace{-1em}
        $\Delta R_{\mathrm{tot}}(e) \leftarrow \mathrm{Eq}.~\eqref{marginal_gain_weighted}$\;
        \If{$\Delta R_{\mathrm{tot}}(e) > \Delta R_{\mathrm{tot}}^{\mathrm{best}}$}{
               $\Delta R_{\mathrm{tot}}^{\mathrm{best}} \leftarrow \Delta R_{\mathrm{tot}}(e)$\;
               $e^{\star}  \leftarrow e$\;
               $w_{e^{\star}}^{\star} \leftarrow w_e$\;
        }
    }
    $C_{\mathrm{rem}}^{(l)} \leftarrow C_{\mathrm{rem}}^{(l-1)} - c_{e^{\star}}\,w_{e^{\star}}^{\star}$\;
    $\Erem^{(l)} \leftarrow \Erem^{(l-1)} \backslash \{e^{\star}\}$\;
    $\Eused^{(l)} \leftarrow \Eused^{(l-1)} \cup \{e^{\star}\}$\;
    \hspace{-5em}\Comment{Augment network with selected edge}\;
    Update $(\mM'_l)^{-1} \,\mathrm{using}\, \mathrm{Eq}.~\eqref{eq:sherman_morrison}$\;
    {Update $\left((\mM'_l)^{-1}\right)^{2} \,\mathrm{using}\, \mathrm{Eq}.~\eqref{eq:Deter-update}$}\;
}
\end{algorithm}}

Although $(\mathcal P1)$ provides a lower bound and supports feasible recovery by rounding, solving it remains costly at large scale. We therefore use a budget-feasible greedy heuristic. At each iteration, the method selects the admissible candidate with the largest marginal reduction in total effective resistance. A reserve retains enough budget to assign the minimum weight $w_{\min}$ to all remaining additions. Proposition~\ref{prop:exactk_feasibility} therefore guarantees exactly $k$ added edges whenever $(C,k)$ is feasible. Sherman-Morrison updates replace repeated $\mathcal O(n^3)$ factorizations by $\mathcal O(n^2)$ rank-one updates.

Let $\mM'_0\coloneqq\mLb+\vone\vone^\top/n$, and let $\mM'_l$ denote the shifted Laplacian after $l$ additions. Adding $e=\{v_i,v_j\}$ with incidence vector $\va_e=\ve_{v_i}-\ve_{v_j}$ and weight $w_e$ gives $\mM'_l=\mM'_{l-1}+w_e\va_e\va_e^\top$. By the Sherman-Morrison identity~\cite{ShermanMorrison1950},
\begin{equation}\label{eq:sherman_morrison}
\mM'^{-1}
= \mM^{-1}
- \frac{w_e\,\mM^{-1}\va_e\va_e^\top \mM^{-1}}{1+w_e\,\va_e^\top \mM^{-1}\va_e}.
\end{equation}
At the current shifted Laplacian, define $r_e=\va_e^\top\mM^{-1}\va_e$ as the effective resistance between the endpoints and $b_e^2=\|\mM^{-1}\va_e\|_2^2$ as their squared biharmonic distance. The marginal reduction in total effective resistance is
\begin{equation}\label{marginal_gain_weighted}
    \Delta R_{\mathrm{tot}}(e) = \frac{n\,w_e\,b_e^{2}}{1+w_e\,r_e}.
\end{equation}
A key design choice is how to assign a trial weight $w_e$ to each candidate during the greedy scan. Rather than solving a weight-optimization subproblem at every step, we use a simple cost-aware provisional spend $s \coloneqq C_{\mathrm{rem}}/k_{\mathrm{rem}}$ over the remaining additions, then cap it so the reserve $R_e$ needed to complete the other $k_{\mathrm{rem}}-1$ additions at the floor $w_{\min}$ is left untouched:
\begin{equation}\label{weight_greedy}
w_e \coloneqq \min\!\left\{\max\left\{w_{\min},\min\left\{w_{\max},\frac{s}{c_e}\right\}\right\},\frac{C_{\mathrm{rem}}-R_e}{c_e}\right\},
\end{equation}
where $R_e \coloneqq w_{\min}\sum_{i=1}^{k_{\mathrm{rem}}-1} c_{(i)}$, with $c_{(1)}\le c_{(2)}\le\cdots$ the ascending costs of $\Erem\setminus\{e\}$ (and $R_e\coloneqq 0$ when $k_{\mathrm{rem}}=1$). For any admissible candidate, i.e.\ $c_e w_{\min}\le C_{\mathrm{rem}}-R_e$, the cap never lowers $w_e$ below $w_{\min}$, so $w_e\in[w_{\min},w_{\max}]$.

\begin{proposition}[Exact-$k$ feasibility of reserve-capped greedy]
\label{prop:exactk_feasibility}
Let $0<c_e<\infty$ for all $e\in\Ec$ and $0<w_{\min}\le w_{\max}$, and order the candidate costs as $c_{(1)}\le\cdots\le c_{(p)}$. Then $(\mathcal P0)$ is feasible for $(C,k)$ if and only if $k\le p$ and $C\ge w_{\min}\sum_{i=1}^{k}c_{(i)}$. Under this condition, Algorithm~\ref{alg:greedy} admits an admissible candidate at every iteration and returns a feasible exact-$k$ solution of $(\mathcal P0)$.
\end{proposition}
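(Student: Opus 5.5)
The argument has two parts: the feasibility characterization of $(\mathcal P0)$, and an invariant showing the greedy loop never runs out of admissible candidates.

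\emph{Feasibility characterization.} For necessity, take any feasible $(\vw,\vz)$. The constraint $\vone^\top\vz=k$ with $\vz\in\{0,1\}^p$ forces $k\le p$. Let $S$ be the support of $\vz$, so $|S|=k$ and $w_e\ge w_{\min}$ on $S$. Then
\[
C\ge\vc^\top\vw\ge w_{\min}\sum_{e\in S}c_e\ge w_{\min}\sum_{i=1}^k c_{(i)},
\]
since any $k$ costs sum to at least the $k$ smallest. For sufficiency, take $S$ to be the indices of the $k$ cheapest candidates. Set $z_e=1$ and $w_e=w_{\min}$ on $S$, and zero elsewhere. The box constraints hold because $w_{\min}\le w_{\max}$, and the budget holds by hypothesis. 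Connectivity of the base graph plays no role here.

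\emph{Greedy invariant.} At the start of iteration $l$, write $r=k_{\mathrm{rem}}^{(l-1)}=k-l+1$. I will maintain
\[
C_{\mathrm{rem}}^{(l-1)}\ge w_{\min}\,\sigma_r\bigl(\Erem^{(l-1)}\bigr),
\]
where $\sigma_r(E)$ is the sum of the $r$ smallest costs in $E$, together with $|\Erem^{(l-1)}|=p-l+1\ge r$. At $l=1$ this is exactly the feasibility condition, so it also explains why the initial check in Algorithm~\ref{alg:greedy} is correct.

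\emph{Existence of an admissible candidate.} Let $e_0$ be a cheapest element of $\Erem$. Then $R_{e_0}=w_{\min}\,\sigma_{r-1}(\Erem\setminus\{e_0\})$. Because $e_0$ is cheapest, $c_{e_0}+\sigma_{r-1}(\Erem\setminus\{e_0\})=\sigma_r(\Erem)$. The invariant then gives $C_{\mathrm{rem}}-c_{e_0}w_{\min}\ge R_{e_0}$, so $e_0$ is admissible. It follows that the inner loop always sets $e^\star$: every admissible candidate has $w_e\ge w_{\min}>0$, and by Proposition~\ref{thm:equal-marginal} the quantity $b_e^2=\|\mM^{-1}\va_e\|_2^2$ is positive. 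Hence $\Delta R_{\mathrm{tot}}(e)>0=\Delta R_{\mathrm{tot}}^{\mathrm{best}}$ initially, and some $e^\star$ is chosen. As noted after \eqref{weight_greedy}, admissibility also gives $w_{e^\star}\in[w_{\min},w_{\max}]$.

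\emph{Propagation of the invariant.} This is the step that needs the most care. By the cap in \eqref{weight_greedy}, $c_{e^\star}w_{e^\star}\le C_{\mathrm{rem}}-R_{e^\star}$. Therefore
\[
C_{\mathrm{rem}}^{(l)}\ge R_{e^\star}=w_{\min}\,\sigma_{r-1}\bigl(\Erem^{(l-1)}\setminus\{e^\star\}\bigr)=w_{\min}\,\sigma_{r-1}\bigl(\Erem^{(l)}\bigr),
\]
which is precisely the invariant for iteration $l+1$, since $k_{\mathrm{rem}}^{(l)}=r-1$. The reserve is defined relative to $\Erem\setminus\{e\}$, which is exactly the next remaining set, so the identity holds regardless of which $e^\star$ is picked. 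When $r=1$ we have $R_e=0$, and the inequality reduces to $C_{\mathrm{rem}}\ge0$.

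\emph{Conclusion.} After $k$ iterations exactly $k$ distinct edges have been added, each with weight in $[w_{\min},w_{\max}]$. Setting $\vz$ to the indicator of $\Eused^{(k)}$ satisfies $\vone^\top\vz=k$ and the box constraints. Total spend is $C-C_{\mathrm{rem}}^{(k)}\le C$ because $C_{\mathrm{rem}}^{(k)}\ge0$. The resulting pair is therefore feasible for $(\mathcal P0)$.
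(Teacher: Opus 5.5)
Your proof is correct and follows the paper's argument. It uses the same invariant $C_{\mathrm{rem}}^{(l-1)}\ge w_{\min}\,\sigma_r(\Erem^{(l-1)})$ with $|\Erem^{(l-1)}|\ge r$, the same admissibility of the cheapest residual edge, and the same propagation through the reserve cap in \eqref{weight_greedy} via $\Erem^{(l-1)}\setminus\{e^\star\}=\Erem^{(l)}$; your direct construction for sufficiency, and your check that $b_e^2>0$ makes the strict comparison against $\Delta R_{\mathrm{tot}}^{\mathrm{best}}=0$ actually set $e^\star$, are small details the paper leaves implicit.
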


\begin{proof}
Let $\Phi_m(S)$ denote the sum of the $m$ smallest costs in
a finite set $S$, with $\Phi_0(S)=0$. The conditions are necessary because
any feasible exact-$k$ design selects $k$ edges. Each selected edge has
weight at least $w_{\min}$. Therefore, the total cost is at least
$w_{\min}\Phi_k(\Ec)=w_{\min}\sum_{i=1}^{k}c_{(i)}$.

For sufficiency, we prove by induction that
$|\Erem^{(l-1)}|\ge r$ and
$C_{\mathrm{rem}}^{(l-1)}\ge
w_{\min}\Phi_r(\Erem^{(l-1)})$, where
$r=k_{\mathrm{rem}}^{(l-1)}$. It holds at $l=1$ by the
feasibility condition. At iteration $l$, define the reserve
$R_e=w_{\min}\Phi_{r-1}(\Erem^{(l-1)}\setminus\{e\})$.
Under the invariant, any edge among the $r$ cheapest residual
edges is admissible, since
$c_e w_{\min}+R_e=w_{\min}\Phi_r(\Erem^{(l-1)})
\le C_{\mathrm{rem}}^{(l-1)}$. Hence the admissible set is
nonempty. For the selected admissible edge $e^\star$, the
reserve cap in~\eqref{weight_greedy} gives
$C_{\mathrm{rem}}^{(l)}
=C_{\mathrm{rem}}^{(l-1)}-c_{e^\star}w_{e^\star}
\ge R_{e^\star}
=w_{\min}\Phi_{r-1}(\Erem^{(l)})$.
Moreover, $|\Erem^{(l)}|\ge r-1$, so the invariant holds for the
next iteration. Induction gives $k$ selected edges, each with weight
in $[w_{\min},w_{\max}]$, and total cost at most $C$.
\end{proof}

The method first forms
$\mA_0\coloneqq(\mM'_0)^{-1}$ and $\mA_0^2$, which costs
$\mathcal O(n^3)$ in dense arithmetic. The entries of $\mA$ give
$r_e$ in constant time. Following the biharmonic-distance caching
idea of~\cite{zhou2025efficient}, the squared distance is also
obtained in constant time from $\mA^2$:
\begin{equation}
b_e^2
=
[\mA^2]_{v_i,v_i}
+
[\mA^2]_{v_j,v_j}
-
2[\mA^2]_{v_i,v_j}.
\end{equation}

For the selected edge, let
$\mathbf{g}_e\coloneqq\mA\va_e$ and
$\chi_e\coloneqq w_e/(1+w_e r_e)$. The inverse and squared-inverse
caches are updated as
\begin{equation}
\mA'
=
\mA-\chi_e\mathbf{g}_e\mathbf{g}_e^\top
\end{equation}
and
\begin{equation}\label{eq:Deter-update}
\begin{aligned}
(\mA')^2
={}&
\mA^2
-\chi_e\bigl(
\mA\mathbf{g}_e\mathbf{g}_e^\top
+
\mathbf{g}_e\mathbf{g}_e^\top\mA
\bigr)+
\chi_e^2\|\mathbf{g}_e\|_2^2
\mathbf{g}_e\mathbf{g}_e^\top .
\end{aligned}
\end{equation}
Both cache updates cost $\mathcal O(n^2)$.

After the one-time ordering of the candidate costs, the reserve, effective resistance, and biharmonic distance of each candidate are evaluated in $\mathcal O(1)$ time. Each iteration therefore requires an $\mathcal O(p)$ candidate scan and $\mathcal O(n^2)$ cache updates. The total complexity is $\mathcal O\left(n^3+k(p+n^2)\right).$ Storing $\mA$ and $\mA^2$ requires $\mathcal O(n^2)$ memory. The full procedure is summarized in Algorithm~\ref{alg:greedy}. The analysis below does not claim a global approximation ratio. Instead, Theorem~\ref{thm:policy_bound_alg2} characterizes the decay of the Bellman-optimal value-to-go along the realized greedy trajectory. Theorem~\ref{thm:explicit_rho_alg2} then gives a conservative spectral lower bound on the associated local policy ratio.

We compare the greedy policy with a sequentially optimal policy over the reachable states induced by the same reserve-capped weight rule. Each state records the current augmented matrix, remaining candidate set, selected edges, and residual budget. One-step residual-gain comparisons of this type underlie classical greedy and weak-submodular analyses~\cite{NemhauserWolseyFisher1978,DasKempe2018}, with related policy comparisons used in adaptive
settings~\cite{GolovinKrause2011}. We therefore define Bellman-optimal and greedy value-to-go functions and a state-dependent local policy ratio.
\begin{definition}[Bellman-optimal and greedy value-to-go]
\label{def:value_functions_alg2}
For each iteration \(l\in\{1,\dots,k\}\), define the reachable state
\(
\mathcal X_l
\coloneqq
\bigl(
\mM'_{l-1},\,
\Erem^{(l-1)},\,
\Eused^{(l-1)},\,
C_{\mathrm{rem}}^{(l-1)}
\bigr),
\,\) and \(
k_{\mathrm{rem}}^{(l-1)}
\coloneqq
k-\lvert \Eused^{(l-1)}\rvert.
\)
With $r\coloneqq k_{\mathrm{rem}}^{(l-1)}$, let $R_e^{(l)}$ be the reserve of Eq.~\eqref{weight_greedy} evaluated at state $\mathcal X_l$, that is, the least floor-cost $w_{\min}\sum_{i=1}^{r-1}c_{(i)}$ of the $r-1$ cheapest candidates of $\Erem^{(l-1)}\setminus\{e\}$ (with $R_e^{(l)}\coloneqq 0$ when $r=1$). The feasible action set is
\(
\mathcal F_l
\coloneqq
\Bigl\{
e\in \Erem^{(l-1)}
:\;
c_e w_{\min}\le C_{\mathrm{rem}}^{(l-1)}-R_e^{(l)}
\Bigr\},
\)
and the transition function \(f(\mathcal X_l,e)\) produces the next state \(\mathcal X_{l+1}\) by committing \(e\): the shifted Laplacian advances to \(\mM'_l\) (its inverse maintained via the rank-one update~\eqref{eq:sherman_morrison}), the budget is decremented as \(C_{\mathrm{rem}}^{(l)}=C_{\mathrm{rem}}^{(l-1)}-c_e w_e^{(l)}\), and the edge sets are updated to \(\Erem^{(l)}=\Erem^{(l-1)}\setminus\{e\}\) and \(\Eused^{(l)}=\Eused^{(l-1)}\cup\{e\}\). For each candidate \(e\in\mathcal F_l\), the one-step gain at state \(\mathcal X_l\) is \(\Delta R_{\mathrm{tot}}^{(l)}(e) \coloneqq n\,w_e^{(l)}\bigl(b_e^{2}\bigr)^{(l)}/\bigl(1+w_e^{(l)}\,r_e^{(l)}\bigr)\). The weight \(w_e^{(l)}\) entering this gain is the reserve-capped trial weight of Eq.~\eqref{weight_greedy}; for every \(e\in\mathcal F_l\), admissibility gives \(w_e^{(l)}\in[w_{\min},w_{\max}]\) and \(c_e w_e^{(l)}\le C_{\mathrm{rem}}^{(l-1)}-R_e^{(l)}\), so committing \(e\) leaves the reserve \(R_e^{(l)}\) intact and the residual instance stays exact-\(k\) feasible.
The Bellman-optimal value-to-go from \(\mathcal X_l\) is defined recursively:
\begin{equation*}
V_l(\mathcal X_l)=
\begin{cases}
0,
& \hspace{-10em}
\text{if }\mathcal F_l=\varnothing
\text{ or }
k_{\mathrm{rem}}^{(l-1)}=0,\\[0.6em]
\displaystyle
\max_{e\in\mathcal F_l}
\Bigl\{
\Delta R_{\mathrm{tot}}^{(l)}(e)
+
V_{l+1}\bigl(f(\mathcal X_l,e)\bigr)
\Bigr\},
& \hspace{-.6em}
\text{otherwise.}
\end{cases}
\end{equation*}
Whenever \(\mathcal F_l\neq\varnothing\) and \(k_{\mathrm{rem}}^{(l-1)}\ge 1\), the greedy action at iteration \(l\) selects the edge \(e_l^\star = \arg\max_{e\in\mathcal F_l}\Delta R_{\mathrm{tot}}^{(l)}(e)\), with ties broken by enumeration order. The value accumulated by Algorithm~\ref{alg:greedy} from state \(\mathcal X_l\) is
\begin{equation*}
G_l(\mathcal X_l)=
\begin{cases}
0,
& \hspace{-10em}
\text{if }\mathcal F_l=\varnothing
\text{ or }
k_{\mathrm{rem}}^{(l-1)}=0,\\[0.6em]
\displaystyle
\Delta R_{\mathrm{tot}}^{(l)}(e_l^\star)
+
G_{l+1}\bigl(f(\mathcal X_l,e_l^\star)\bigr),
&
\text{otherwise.}
\end{cases}
\end{equation*}
At the terminal stage we set \(V_{k+1}(\mathcal X_{k+1})\coloneqq 0\) and \(G_{k+1}(\mathcal X_{k+1})\coloneqq 0\), reflecting that no further iterations remain after~\(k\).
Thus, $V_l(\mathcal X_l)$ is the maximum cumulative gain obtainable under the reserve-capped trial-weight rule, whereas $G_l(\mathcal X_l)$ is the gain produced by the myopic action sequence of Algorithm~\ref{alg:greedy}.
\end{definition}

\begin{definition}[Local policy ratio]
\label{def:local_policy_ratio_alg2}
For each iteration \(l\in\{1,\dots,k\}\) with
\(\mathcal F_l\neq\varnothing\) and \(k_{\mathrm{rem}}^{(l-1)}\ge 1\),
let \(\rho_l\in[0,1]\) be the largest scalar for which
\begin{equation}
\max_{e\in\mathcal F_l}
\Delta R_{\mathrm{tot}}^{(l)}(e)
\;\ge\;
\frac{\rho_l}{k_{\mathrm{rem}}^{(l-1)}}
\,V_l(\mathcal X_l)
\nonumber
\end{equation}
holds. We call \(\rho_l\) the \emph{local policy ratio} at state \(\mathcal X_l\).
\end{definition}
Since $\mathcal F_l$ is finite and nonempty and every admissible gain is positive, $V_l(\mathcal X_l)>0$ and $\rho_l$ is well-defined. The ratio measures the fraction of the Bellman-optimal value-to-go captured by the best admissible one-step action after normalization by the number of remaining additions.

\begin{theorem}[One-step greedy progress, Bellman residual decay, and greedy-value bound]
\label{thm:policy_bound_alg2}
Let \(\rho_l\) be the local policy ratio of Definition~\ref{def:local_policy_ratio_alg2}, and suppose \((C,k)\) is feasible in the sense of Proposition~\ref{prop:exactk_feasibility}. Then every reachable state \(\mathcal X_l\) with \(l\in\{1,\dots,k\}\) has \(\mathcal F_l\neq\varnothing\) and \(k_{\mathrm{rem}}^{(l-1)}\ge 1\), and the greedy edge \(e_l^\star\) of Definition~\ref{def:value_functions_alg2} satisfies:
\begin{enumerate}[label=(\roman*),ref=\roman*]
    \item\label{cond:progress_alg2} $\Delta R_{\mathrm{tot}}^{(l)}(e_l^\star) \ge \dfrac{\rho_l}{k_{\mathrm{rem}}^{(l-1)}}\,V_l(\mathcal X_l),$
    \item\label{cond:contraction_alg2} $V_{l+1}\!\bigl(f(\mathcal X_l,e_l^\star)\bigr) \le \left(1-\dfrac{\rho_l}{k_{\mathrm{rem}}^{(l-1)}}\right)V_l(\mathcal X_l),$
    \item\label{cond:greedy_alg2} $0 \le G_l(\mathcal X_l) \le V_l(\mathcal X_l).$
\end{enumerate}
Moreover, along the realized greedy trajectory
\(\mathcal X_{t+1}=f(\mathcal X_t,e_t^\star)\), one has, for every
\(m\in\{l,\dots,k\}\),
\begin{equation}
\label{eq:product_decay}
V_{m+1}(\mathcal X_{m+1})
\le
\prod_{t=l}^{m}
\left(
1-\frac{\rho_t}{k_{\mathrm{rem}}^{(t-1)}}
\right)
V_l(\mathcal X_l).
\end{equation}
If, in addition, $\rho_t\ge\rho_{\min}>0$ for all $t$, then Proposition~\ref{prop:exactk_feasibility} gives $k_{\mathrm{rem}}^{(t-1)}=k-(t-1)$, and
\begin{equation}
\label{eq:product_decay_uniform}
V_{m+1}(\mathcal X_{m+1})
\le
e^{-\rho_{\min}\bigl(H_{k-l+1}-H_{k-m}\bigr)}\,V_l(\mathcal X_l),
\end{equation}
with \(H_q\coloneqq \sum_{j=1}^q \frac1j\) and \(H_0\coloneqq 0\).
\end{theorem}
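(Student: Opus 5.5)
We first establish the structural claim, that every reachable state has a nonempty action set and a positive number of remaining additions, and then derive (i)--(iii) and the product bounds from Definitions~\ref{def:value_functions_alg2} and~\ref{def:local_policy_ratio_alg2}.

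\textbf{Structural claim.} We reuse the invariant from the proof of Proposition~\ref{prop:exactk_feasibility}. Along any reachable trajectory (greedy or not), it gives $|\Erem^{(l-1)}|\ge r$ and $C_{\mathrm{rem}}^{(l-1)}\ge w_{\min}\Phi_r(\Erem^{(l-1)})$ with $r=k-(l-1)$. The induction there uses only admissibility of the committed edge and the reserve cap in~\eqref{weight_greedy}, not the greedy choice, so it applies to every reachable state. Hence the $r$ cheapest residual edges lie in $\mathcal F_l$, so $\mathcal F_l\neq\varnothing$. Since exactly one edge is added per iteration, $k_{\mathrm{rem}}^{(l-1)}=k-l+1\ge1$ for $l\le k$. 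Every admissible gain is positive: $w_e\ge w_{\min}>0$, $b_e^2>0$ by Proposition~\ref{thm:equal-marginal}, and $r_e>0$. It follows by backward induction that $V_l>0$ and that $\rho_l$ is well defined.

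\textbf{Items (i)--(iii).} Item (i) is immediate: $e_l^\star$ attains $\max_{e\in\mathcal F_l}\Delta R^{(l)}_{\mathrm{tot}}(e)$, and Definition~\ref{def:local_policy_ratio_alg2} applies. For (ii), the Bellman recursion with $e=e_l^\star$ gives
\[V_l(\mathcal X_l)\ge \Delta R^{(l)}_{\mathrm{tot}}(e_l^\star)+V_{l+1}(f(\mathcal X_l,e_l^\star)).\]
Rearranging and using (i) yields the contraction; at $l=k$ the next value $V_{k+1}$ is $0$ by convention. For (iii), nonnegativity of $G_l$ follows by backward induction from the positivity of the gains. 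The bound $G_l\le V_l$ also follows by backward induction: $G_l=\Delta R(e_l^\star)+G_{l+1}(f(\cdot,e_l^\star))\le \Delta R(e_l^\star)+V_{l+1}(f(\cdot,e_l^\star))\le V_l$, with base case $G_{k+1}=V_{k+1}=0$.

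\textbf{Product bounds.} Iterating (ii) along the realized greedy trajectory from $t=l$ to $m$ gives~\eqref{eq:product_decay}. Each factor lies in $[0,1]$ because $\rho_t\le1\le k_{\mathrm{rem}}^{(t-1)}$, so the inequalities chain monotonically. For~\eqref{eq:product_decay_uniform}, substitute $k_{\mathrm{rem}}^{(t-1)}=k-t+1$ and $\rho_t\ge\rho_{\min}$, bound each factor using $1-x\le e^{-x}$, and sum $\sum_{t=l}^{m}1/(k-t+1)=H_{k-l+1}-H_{k-m}$.

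The main obstacle is the structural claim. We must check that the invariant of Proposition~\ref{prop:exactk_feasibility} really holds at every reachable state, since $V_l$ ranges over non-greedy transitions as well. We must also confirm that the per-candidate reserve $R_e^{(l)}$ in $\mathcal F_l$ coincides with $w_{\min}\Phi_{r-1}$ of the residual set, so that the invariant carries over after committing any $e\in\mathcal F_l$.
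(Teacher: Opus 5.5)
Your proposal is correct and follows essentially the same route as the paper's proof in Appendix~\ref{app:policy_bound}: (i) comes from the definition of $\rho_l$ at the greedy maximizer, (ii) from evaluating the Bellman recursion at $e_l^\star$ and rearranging, (iii) from backward induction, and the product bounds from chaining (ii), applying $1-z\le e^{-z}$, and re-indexing the harmonic sum. Your explicit checks fill in points the paper leaves implicit: that the invariant of Proposition~\ref{prop:exactk_feasibility} holds along every reachable (not only greedy) trajectory, and that each factor $1-\rho_t/k_{\mathrm{rem}}^{(t-1)}$ lies in $[0,1]$ so the inequalities can be chained.
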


\begin{proof}
The proof is given in Appendix~\ref{app:policy_bound}.
\end{proof}

\begin{remark}
Theorem~\ref{thm:policy_bound_alg2} controls the decay of the
Bellman-optimal residual along the greedy trajectory, but it does not by itself
imply a direct global lower bound of the form
\(G_1\ge (1-e^{-\rho_{\min}})V_1\). Such a guarantee requires additional structure
linking the realized successor state of the greedy action to the residual value
function, as occurs in classical set-function analyses and in stronger
policy-based formulations.
\end{remark}

Equation~\eqref{eq:product_decay} retains the state-dependent ratios $\rho_t$ and is therefore sharper than the uniform exponential envelope in~\eqref{eq:product_decay_uniform}. To make the latter computable, the next theorem lower-bounds every $\rho_l$ using spectral envelopes of the augmented matrices.
\begin{theorem}[Uniform local policy ratio from spectral envelopes]
\label{thm:explicit_rho_alg2}
Assume that every matrix \(\mM'_l\) is symmetric positive definite, and define the matrix
\(
\mM'_{\max}\coloneqq
\mM'_0 + w_{\max}\sum_{e\in \Ec}\va_e\va_e^\top
\)
together with the spectral envelopes
\(
\lambda_{\min}^{\mathrm{lower}}
\coloneqq \lambda_{\min}(\mM'_0)>0
\)
and
\(
\lambda_{\max}^{\mathrm{upper}}
\coloneqq
\lambda_{\max}(\mM'_{\max}).
\)
Then the local policy ratio admits the uniform lower bound
\begin{equation}
\label{eq:explicit_rho_alg2}
\rho_l
\;\ge\;
\frac{w_{\min}}{w_{\max}}
\cdot
\frac{\bigl(\lambda_{\min}^{\mathrm{lower}}\bigr)^2}{\bigl(\lambda_{\max}^{\mathrm{upper}}\bigr)^2}
\cdot
\frac{1}{1+\dfrac{2w_{\max}}{\lambda_{\min}^{\mathrm{lower}}}}
\;=:\;
\rho_{\min},
\end{equation}
valid whenever the local policy ratio $\rho_l$ is defined.
\end{theorem}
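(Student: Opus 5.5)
Since $\rho_l$ is by Definition~\ref{def:local_policy_ratio_alg2} the largest scalar in $[0,1]$ with $\max_{e\in\mathcal F_l}\Delta R_{\mathrm{tot}}^{(l)}(e)\ge (\rho_l/k_{\mathrm{rem}}^{(l-1)})V_l(\mathcal X_l)$, it suffices to show that this inequality holds with $\rho_l$ replaced by $\rho_{\min}$ (clearly $\rho_{\min}\le 1$). The plan is a two-sided sandwich: bound every admissible one-step gain from below uniformly along all reachable states, bound every admissible one-step gain from above uniformly, and then bound $V_l(\mathcal X_l)$ by $k_{\mathrm{rem}}^{(l-1)}$ times the upper bound. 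Indeed, $V_l$ is a sum of at most $r=k_{\mathrm{rem}}^{(l-1)}$ one-step gains evaluated at (possibly different) reachable states, each of the form $n w_e b_e^2/(1+w_e r_e)$ with $w_e\in[w_{\min},w_{\max}]$. Therefore $V_l\le r\,G^{\mathrm{up}}$ and $\max_e\Delta R\ge G^{\mathrm{low}}$, giving $\rho_l\ge G^{\mathrm{low}}/G^{\mathrm{up}}$.

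\textbf{Spectral envelopes.} Every reachable $\mM'_l$ has the form $\mM'_0+\sum_{e\in S}w_e\va_e\va_e^\top$ with $w_e\in[w_{\min},w_{\max}]$ and $S\subseteq\Ec$. Hence $\mM'_0\preceq \mM'_l\preceq \mM'_{\max}$, so all eigenvalues of $\mM'_l$ lie in $[\lambda_{\min}^{\mathrm{lower}},\lambda_{\max}^{\mathrm{upper}}]$. Consequently, for any $e$ we have $\|\va_e\|^2/\lambda_{\max}^2\le b_e^2=\va_e^\top\mM^{-2}\va_e\le \|\va_e\|^2/\lambda_{\min}^2$, and $0\le r_e=\va_e^\top\mM^{-1}\va_e\le \|\va_e\|^2/\lambda_{\min}$, with $\|\va_e\|^2=2$.

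\textbf{Gain bounds.} The map $w\mapsto nwb^2/(1+wr)$ is increasing in $w$ and decreasing in $r$. For the upper bound, drop the denominator (it is at least $1$) and use $w\le w_{\max}$ to get $G^{\mathrm{up}}=2n w_{\max}/(\lambda_{\min}^{\mathrm{lower}})^2$. For the lower bound, use $w\ge w_{\min}$ in the numerator, and in the denominator use $w\le w_{\max}$ together with $r\le 2/\lambda_{\min}^{\mathrm{lower}}$, to get $G^{\mathrm{low}}=2nw_{\min}/\bigl((\lambda_{\max}^{\mathrm{upper}})^2(1+2w_{\max}/\lambda_{\min}^{\mathrm{lower}})\bigr)$. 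The ratio $G^{\mathrm{low}}/G^{\mathrm{up}}$ is exactly the expression in \eqref{eq:explicit_rho_alg2}. Note that the lower bound is needed only for $\max_{e\in\mathcal F_l}$, and $\mathcal F_l\neq\varnothing$ whenever $\rho_l$ is defined.

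\textbf{Main obstacle.} The step that needs care is justifying $V_l\le r\,G^{\mathrm{up}}$. One must check that along any Bellman path from $\mathcal X_l$ at most $r$ actions are taken, which holds because $V$ is set to zero once $k_{\mathrm{rem}}=0$ and because $V_{k+1}=0$. One must also check that every intermediate state along such a path is again of the sandwiched form. This follows because admissibility, via Proposition~\ref{prop:exactk_feasibility} and the cap in \eqref{weight_greedy}, forces all committed weights into $[w_{\min},w_{\max}]$ and edges to come from $\Ec$. I would establish this by a short induction on the Bellman recursion, showing $V_t(\mathcal X)\le k_{\mathrm{rem}}\,G^{\mathrm{up}}$ for every reachable $\mathcal X$.
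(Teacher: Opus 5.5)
Your proposal is correct and follows the paper's proof essentially step for step: the Loewner sandwich $\mM'_0\preceq\mM'_l\preceq\mM'_{\max}$, Rayleigh-quotient bounds on $r_e$ and $b_e^2$ using $\|\va_e\|_2^2=2$, the uniform gain band $[\delta_{\min},\delta_{\max}]$, and $V_l\le k_{\mathrm{rem}}^{(l-1)}\delta_{\max}$, which together give $\rho_l\ge\delta_{\min}/\delta_{\max}$. You are also more careful than the paper in two places, applying the sandwich to every state reachable along Bellman paths rather than only to greedy iterates (as the paper's wording suggests) and checking $\rho_{\min}\le 1$; note that admissibility together with the cap in \eqref{weight_greedy} already gives $w_e\in[w_{\min},w_{\max}]$, so Proposition~\ref{prop:exactk_feasibility} is not needed for that step.
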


\begin{proof}
The proof is given in Appendix~\ref{app:explicit_rho}.
\end{proof}

Both bounds are illustrated  in Section~\ref{sec:results} (Fig.~\ref{fig:greedy_decay} and
Fig.~\ref{fig:spectral_envelopes}).

\begin{figure}[!t]
    \centering
    \includegraphics[width=\linewidth]{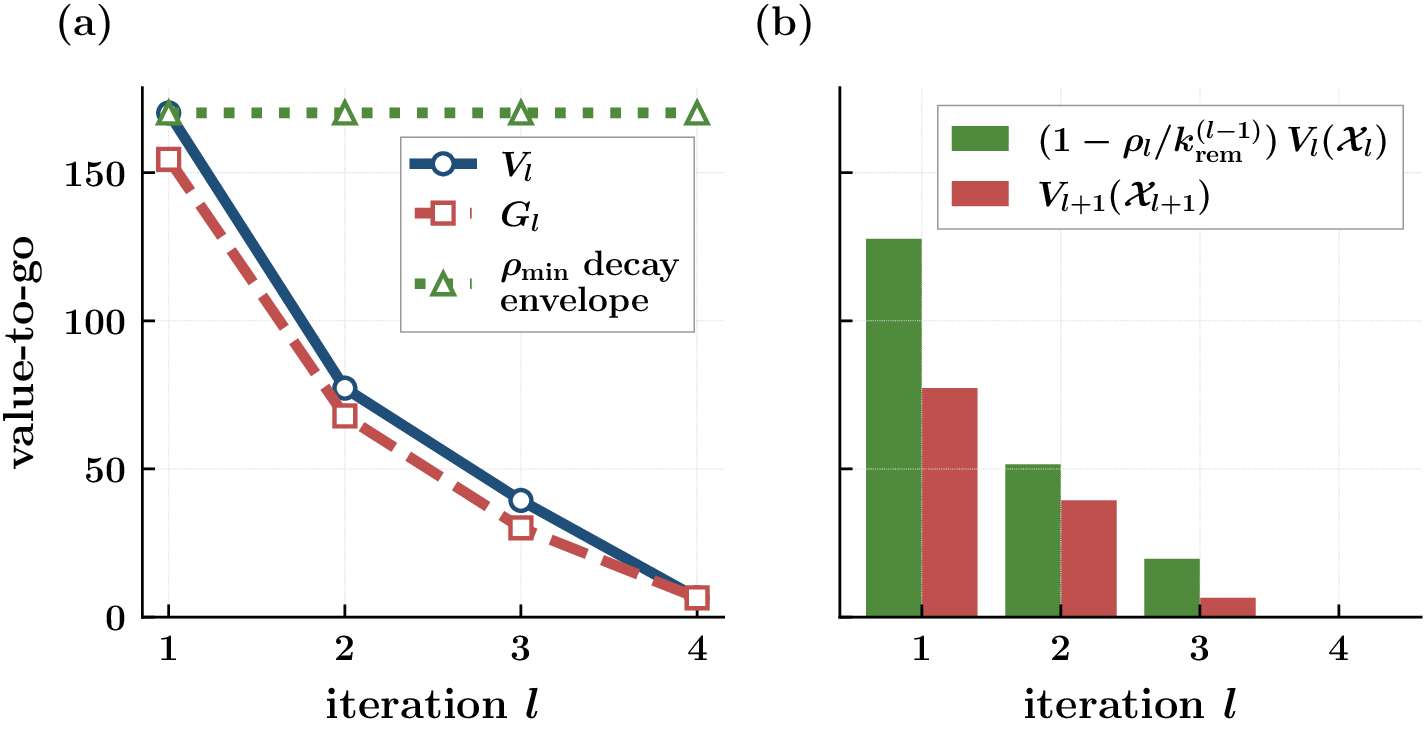}
    \caption{Greedy value bound and Bellman-residual decay
    (Theorem~\ref{thm:policy_bound_alg2}) on a stress-case instance.
    (a)~optimal value-to-go \(V_l\), greedy value \(G_l\), and the
    \(\rho_{\min}\) decay envelope
    \(V_1\prod_{t=1}^{l-1}(1-\rho_{\min}/k_{\mathrm{rem}}^{(t-1)})\).
    (b)~realized next value \(V_{l+1}\) versus the contraction ceiling
    \((1-\rho_l/k_{\mathrm{rem}}^{(l-1)})\,V_l\).}
    \label{fig:greedy_decay}
\end{figure}
\begin{figure}[!t]
    \centering
    \includegraphics[width=\linewidth]{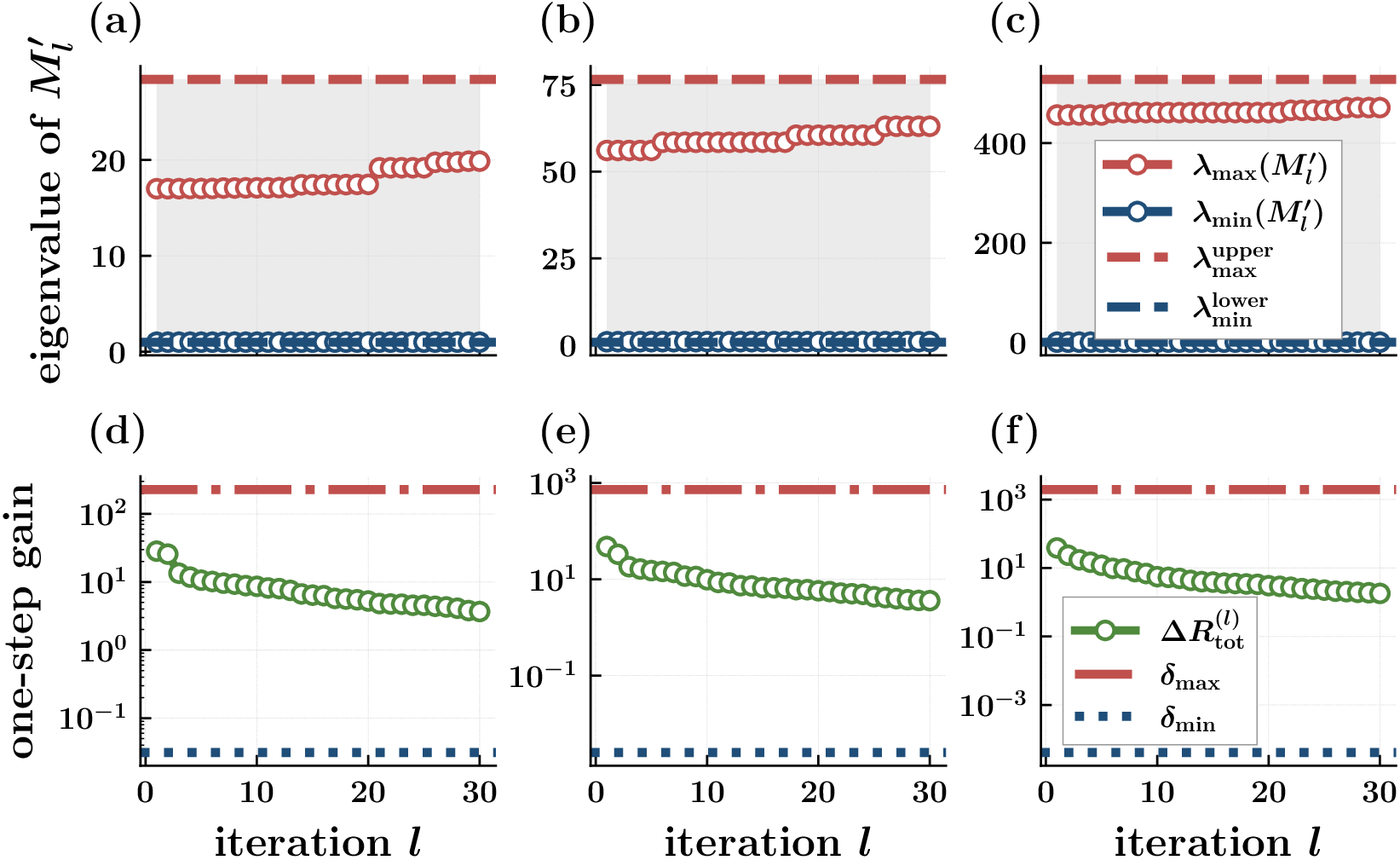}
    \caption{Spectral envelopes of Theorem~\ref{thm:explicit_rho_alg2} on the
    \(n=50\) network (columns: \(\sigma\in\{0.5,1.0,1.5\}\)). Top row (a) to (c):
    eigenvalues \(\lambda_{\max}(\mM'_l)\), \(\lambda_{\min}(\mM'_l)\) and their
    envelopes. Bottom row (d) to (f): per-step gain \(\Delta\Rtot^{(l)}\) and its
    \([\delta_{\min},\delta_{\max}]\) band.}
    \label{fig:spectral_envelopes}
\end{figure}
\section{Experimental Setup}
\label{sec:experimental-setup}

We evaluate the algorithms on four synthetic instances of increasing scale, N$1$ to N$4$, whose node counts $n$ and base-edge counts $|\Eb|$ are listed in Table~\ref{tab:problem-sizes}. All four instances share an average degree of 6, which isolates the effect of graph size from that of density. Node positions are drawn uniformly in the unit square $[0,1]^2$, and the base edges are sampled uniformly from the $\binom{n}{2}$ node pairs. Connectivity is enforced by adding the minimum number of bridging edges. Base edge weights follow a lognormal distribution $w\sim\mathrm{Lognormal}(\mu,\sigma)$, where $\mu$ and $\sigma$ are the mean and standard deviation of $\ln w$. This provides a simple positive, right-skewed weight model, consistent with heterogeneous weights observed in real networks~\cite{barrat2004architecture}. We hold the geometry, candidate set, and cost model fixed while sweeping the base-weight dispersion through three $(\mu,\sigma)$ settings. The dispersion is measured by the coefficient of variation
\(
\mathrm{CV}=\sqrt{e^{\sigma^2}-1},
\)
which depends only on $\sigma$ for a lognormal distribution. The three regimes span increasing dispersion. The baseline $(\mu,\sigma)=(0,0.5)$ is nearly homogeneous, with median $1$ and $\mathrm{CV}\approx0.53$. The intermediate regime $(0,1.0)$ has broader base-weight heterogeneity and $\mathrm{CV}\approx1.31$. The heavy-tailed stress test $(0.5,1.5)$ has median $\approx1.65$, mean $\approx5.08$, and $\mathrm{CV}\approx2.91$. This heavy tail can increase the Laplacian condition number and make the SDP solve more demanding.

The candidate set $\Ec$ is constructed with a clustered $k$-nearest neighbor ($k$-NN) rule. A $k$-means step partitions the nodes into $K = 3$ clusters by their coordinates. Each node is then linked to its $k_{\mathrm{in}} = 3$ nearest neighbors inside its cluster, and a single backbone edge joins the closest node pair between every two clusters. Any candidate already present in $\Eb$ is removed. This yields a structured candidate pool with dense intra-cluster and sparse inter-cluster connectivity. The resulting candidate counts $|\Ec|$ for the four instances are listed in Table~\ref{tab:problem-sizes}. Selected candidate-edge weights are constrained to lie in the box $[w_{\min}, w_{\max}]$, with $w_{\min} = Q_{0.75}$ and $w_{\max} = Q_{0.95}$ the $0.75$ and $0.95$ quantiles of the same lognormal as the base edges. This places new edges in the upper part of the existing weight spectrum, modeling network reinforcement by strong new links rather than weak infill.

Candidate-edge costs follow the linear per-unit deployment model:
\(
\vc = c_0\vone+\beta\,\vd_c .
\)
The distance-independent term $c_0$ is a baseline per-unit cost, while $\beta\,\vd_c$ penalizes Euclidean length. We anchor the baseline unit cost $c_0$ to the median of each instance's base-edge weight distribution. The entries of $\vd_c$ are normalized lengths $d_e/\bar d$, where $d_e$ is the candidate-edge Euclidean length and $\bar d$ is the mean candidate length, making costs comparable across instances. We sweep $\beta\in\{0,0.4,1.6,3.2\}$. At $\beta=0$, all candidates have the same per-unit cost $c_0$, so geometry does not affect the budget. At $\beta=0.4$, length introduces only a mild premium. Larger values, $\beta=1.6$ and $3.2$, increasingly penalize long candidate edges and favor shorter additions. 

\begin{figure}[!t]
    \centering
    \begin{subfigure}{\columnwidth}
        \centering
        \includegraphics[width=\linewidth]{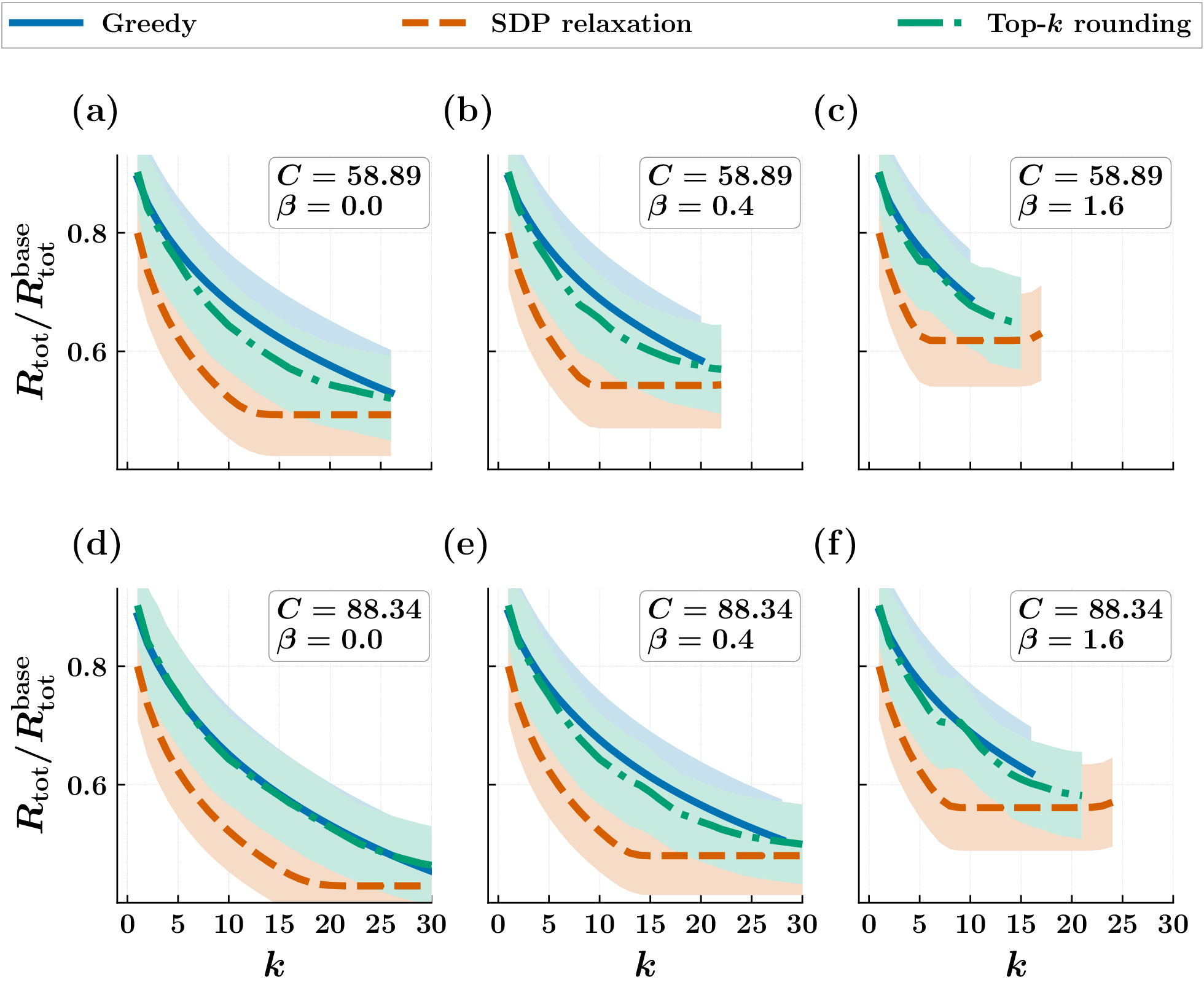}
        \caption{Seed-averaged algorithm comparison (SDP relaxation, top-$k$ rounding, greedy) at $n=50$, $\mu=0$, $\sigma=1$. Curves are the mean over $20$ random base graphs of $R_{\mathrm{tot}}/R_{\mathrm{tot}}^{\mathrm{base}}$; shaded bands span $\pm 1$ standard deviation about the mean across the $20$ seeds. Rows fix the budget $C\in\{58.89, 88.34\}$; columns sweep $\beta$ from weak to strong.}
        \label{fig:r_tot_vs_k}
    \end{subfigure}\\[2pt]
    {\color{gray!50}\hdashrule{\columnwidth}{0.3pt}{2pt}}\\[12pt]
    \begin{subfigure}{\columnwidth}
        \centering
        \includegraphics[width=\linewidth]{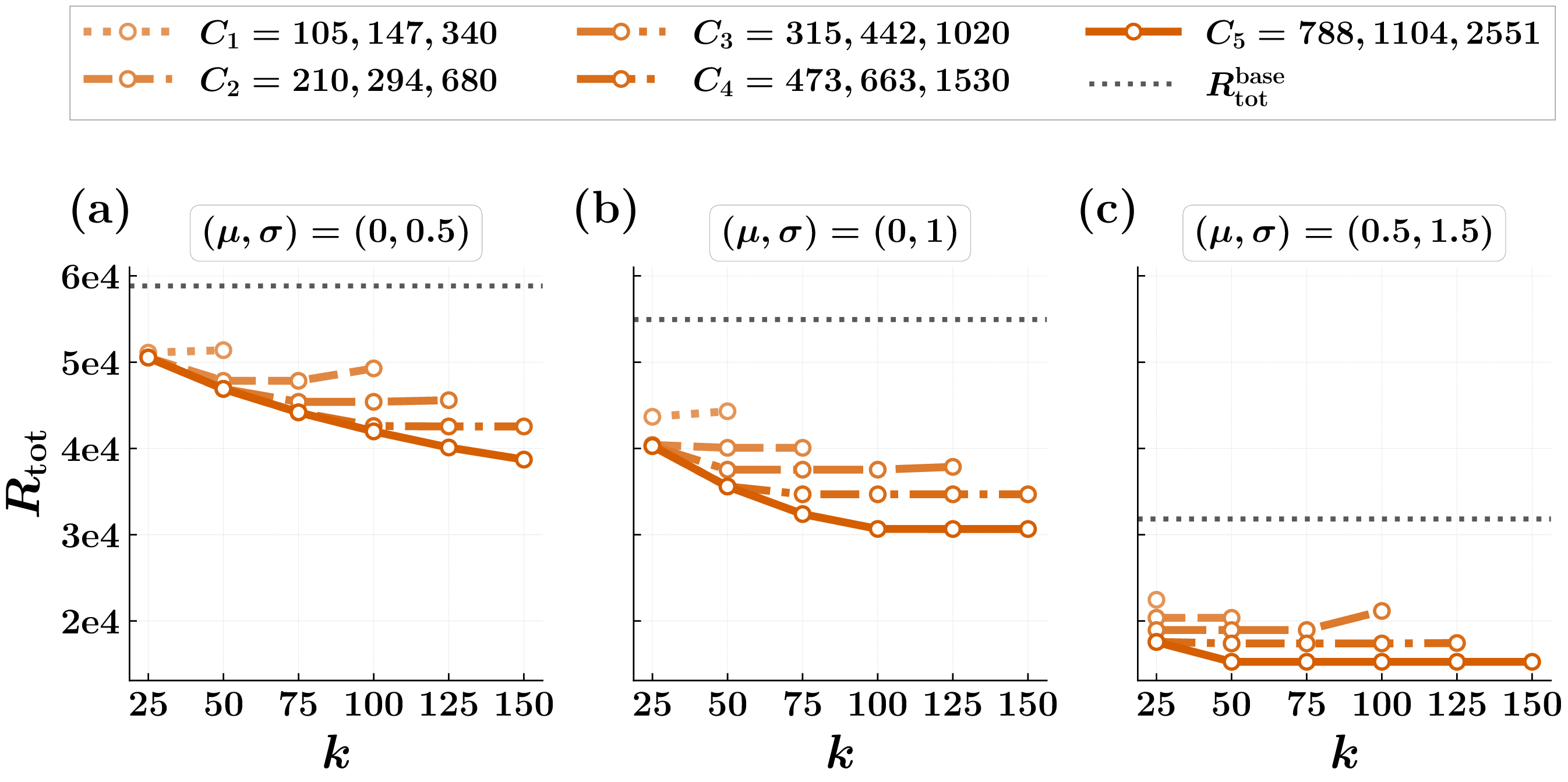}
        \caption{SDP relaxation under varying budgets $C$ at $n=500$, $\beta=1.6$; dotted line is the un-augmented baseline $R_{\mathrm{tot}}^{\mathrm{base}}$ (legend lists $C$).}
        \label{fig:r_tot_vs_k_budget}
    \end{subfigure}
    \caption{$R_{\mathrm{tot}}$ vs the number $k$ of selected augmentation edges.}
    \label{fig:combined_rtot}
\end{figure}

We derive five budget levels $C_1 < \dots < C_5$ from the candidate-edge cost distribution rather than fixing them a priori. The minimum feasible budget for $k$ edges is $C_{\min}(k) = w_{\min}\sum_{i=1}^{k} c_{(i)}$, summed over the $k$ cheapest candidates. The anchor $C_{\min}(k_{\max})$ is the smallest budget that permits all $k_{\max}$ additions. Two levels below the anchor are set to the feasibility budgets for $k\approx\tfrac{1}{3} k_{\max}$ and $\tfrac{2}{3} k_{\max}$ (binding for large $k$, slack for small $k$), and two above at $1.5\times$ and $2.5\times$ the anchor probe the relaxed regime. The tightest budget $C_1$ constrains the problem severely, and the largest budget $C_5$ permits nearly unrestricted additions. The intermediate levels sample the transition between these extremes. All five values depend on the geometry and the lognormal parameters. We compute them once at a fixed nominal cost scale and hold them fixed across the $\beta$ sweep.

\noindent\textit{Real-network case studies.}
We apply the same experimental protocol to two real infrastructure networks: the IEEE 57-bus power grid~\cite{uwpstca,matpower} and the OpenFlights US air-transport network~\cite{openflights}. For the IEEE 57-bus network, we use $n=57$ nodes and assign each existing edge the series-admittance magnitude $1/\sqrt{r^2+x^2}$, computed from the branch resistance $r$ and reactance $x$; schematic bus coordinates are used for distance-based costs. For the OpenFlights network, we take the largest connected component of the busiest US airports ($n=49$), with real airport coordinates and edge weights equal to the number of scheduled routes between airport pairs. In both cases, we keep the clustered $k$-NN candidate-generation rule fixed with $K=k_{\mathrm{in}}=3$. Candidate weights are constrained to the interval $[Q_{0.75},Q_{0.95}]$, where $Q_{0.75}$ and $Q_{0.95}$ are the corresponding quantiles of a lognormal fit to the empirical base-edge weights of each network. The edge-addition cost follows the same model described above.

We evaluate the SDP relaxation, its rounding, and the greedy algorithm on the budgeted effective resistance minimization problem. Each method receives the same inputs at every operating point, so the comparison isolates algorithmic behavior. A feasible integer design is recovered from the fractional SDP solution by the \emph{rounding-and-repair} procedure of Section~\ref{framework}, which we also compare against alternative rounding schemes.

\begin{figure}[!t]
    \centering
    \includegraphics[width=\linewidth]{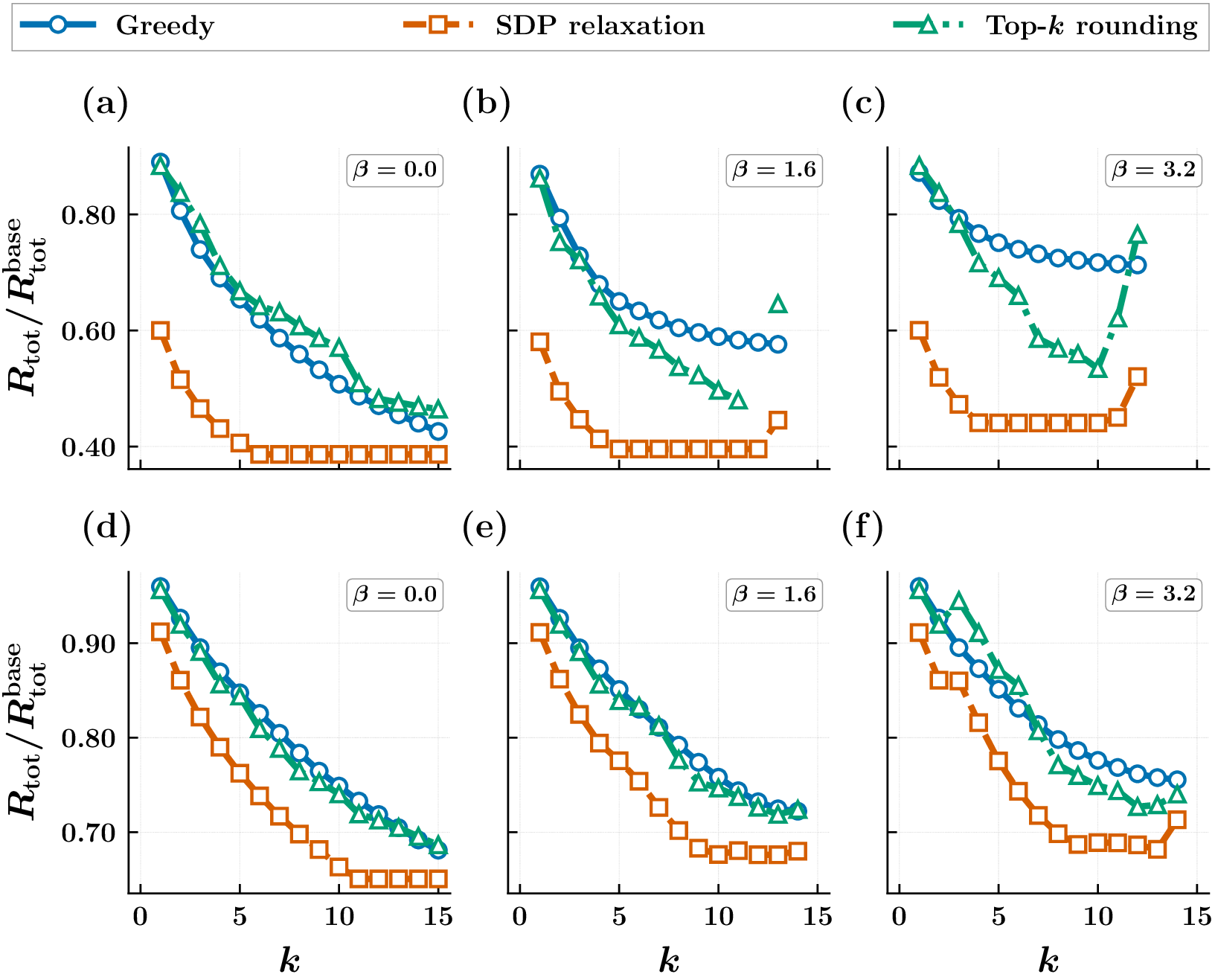}
    \caption{$R_{\mathrm{tot}}/R_{\mathrm{tot}}^{\mathrm{base}}$ versus the number of augmentation edges $k$. Top row: IEEE 57-bus grid; bottom row: OpenFlights air-transport network.}
    \label{fig:realworld}
\end{figure}

\section{Results}
\label{sec:results}

All experiments use the setup of Section~\ref{sec:experimental-setup}. We first report solver behavior, then illustrate the greedy bounds, compare the algorithms, assess rounding quality, and examine the spatial distribution of added-edge weights.

\noindent\textit{Solver behavior.} The HSDE-ADMM solver of
Section~\ref{conic_and_adpt_relax} converges on all four experimental instances.
Algorithm~\ref{alg:hsde}
terminates on the homogeneity-invariant embedding residual $r_t$ of line~\ref{ln:res}. This
residual quantifies the distance of the iterate from the embedding cone and does not certify
the recovered objective value or the primal-dual gap. We therefore also record the recovered primal and dual
objectives $R_{\mathrm{tot}}^{\mathrm{pri}}$ and $R_{\mathrm{tot}}^{\mathrm{dual}}$ at every
iteration with $\tau^t>0$, for the representative case of each instance. At the same iterates
we track the decomposed relative residuals $\varepsilon_{\mathrm{pri}}$,
$\varepsilon_{\mathrm{dual}}$, and $\varepsilon_{\mathrm{gap}}$ of the recovered point
$\bar{\vx}\coloneqq\vx^t/\tau^t$ and $\bar{\vy}\coloneqq\vy^t/\tau^t$. The primal and dual objective values
of $(\mathcal{P}2)$ at the recovered point are $R_{\mathrm{tot}}^{\mathrm{pri}}\coloneqq\vq^\top\bar{\vx}-n$
and $R_{\mathrm{tot}}^{\mathrm{dual}}\coloneqq-\vb^\top\bar{\vy}-n$, with the additive constant $-n$ of
$(\mathcal{P}1)$ restored as in $R_{\mathrm{tot}}^{\mathrm{SDP}}$. Both converge to the relaxation
optimum $\mathrm{val}(\mathcal{P}1)$ as the solver tolerance tightens. The plotted and tabulated
$R_{\mathrm{tot}}^{\mathrm{SDP}}$ throughout the results is this numerical HSDE-ADMM value, which
approximates $\mathrm{val}(\mathcal{P}1)$ to the reported residuals. The recovered dual iterate is
only approximately feasible, so this value is a numerical SDP benchmark rather than a guaranteed
lower bound at finite tolerance. The residuals
$\varepsilon_{\mathrm{pri}}$, $\varepsilon_{\mathrm{dual}}$, and $\varepsilon_{\mathrm{gap}}$ are
the relative primal, dual, and duality-gap residuals of $(\mathcal{P}2)$, given by
$\varepsilon_{\mathrm{pri}}\coloneqq\|\mF\bar{\vx}+\bar{\vs}-\vb\|_2/(1+\|\vb\|_2)$,
$\varepsilon_{\mathrm{dual}}\coloneqq\|\mF^\top\bar{\vy}+\vq\|_2/(1+\|\vq\|_2)$, and
$\varepsilon_{\mathrm{gap}}\coloneqq|\vq^\top\bar{\vx}+\vb^\top\bar{\vy}|
/(1+|\vq^\top\bar{\vx}|+|\vb^\top\bar{\vy}|)$, where $\bar{\vs}\coloneqq\vs^t/\tau^t$ is the recovered slack~\cite{o2016conic}.
Fig.~\ref{fig:solver_convergence}
plots the two objectives and the gap against the iteration, and
Table~\ref{tab:solver_convergence} reports the final relative residuals
$\varepsilon_{\mathrm{pri}}$, $\varepsilon_{\mathrm{dual}}$, and $\varepsilon_{\mathrm{gap}}$,
together with the iteration count and runtime. The stopping tolerance $\varepsilon=10^{-6}$
applies to the embedding residual $r_t$ of line~\ref{ln:res}. The embedding holds $\tau^t=0$
over most of the iterations, so the recovered objectives are defined only after $\tau^t$
becomes positive. The shaded region in each panel marks the iterations with $\tau^t=0$. The primal and dual objectives
agree to within the final gap on the active window. Both $\varepsilon_{\mathrm{dual}}$
and $\varepsilon_{\mathrm{gap}}$ fall below $\varepsilon=10^{-6}$ on all four
instances. The primal residual $\varepsilon_{\mathrm{pri}}$ stays above it, marginally on N$1$ and N$2$ but by more than an order of magnitude on N$3$ and N$4$. The dual objective
$R_{\mathrm{tot}}^{\mathrm{dual}}$ therefore approaches the relaxation optimum to within the
reported relative residuals.

\begin{figure}[!t]
  \centering
  \includegraphics[width=\linewidth]{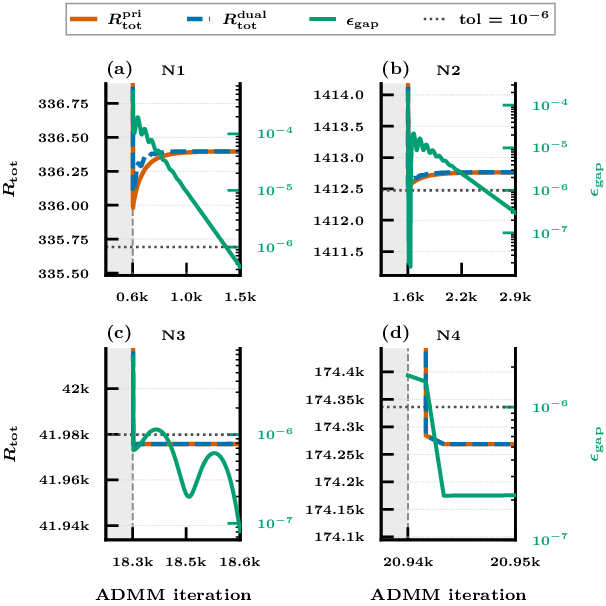}
  \caption{Solver behavior of Algorithm~\ref{alg:hsde} on instances N$1$ to N$4$, with
  $(n,k)$ as $(50,20)$, $(100,40)$, $(500,100)$, and $(1000,200)$. Each panel shows $R_{\mathrm{tot}}^{\mathrm{pri}}$,
  $R_{\mathrm{tot}}^{\mathrm{dual}}$, and $\varepsilon_{\mathrm{gap}}$; the shaded region marks $\tau^t=0$.}
  \label{fig:solver_convergence}
\end{figure}

\begin{table}[!t]
  \centering
  \scriptsize
  \setlength{\tabcolsep}{4pt}
  \renewcommand{\arraystretch}{1.1}
  \caption{Final solver metrics for instances N$1$ to N$4$ at the embedding-residual
  stopping tolerance $\varepsilon=10^{-6}$ (applied to $r_t$).}
  \label{tab:solver_convergence}
  \begin{tabular*}{\columnwidth}{@{\extracolsep{\fill}}lccccc@{}}
    \toprule
    \textbf{Case} & {\boldmath$\varepsilon_{\mathrm{pri}}$} & {\boldmath$\varepsilon_{\mathrm{dual}}$} & {\boldmath$\varepsilon_{\mathrm{gap}}$} & \textbf{iters} & \textbf{runtime (s)} \\
    \midrule
    N$1$ & $1.31{\times}10^{-6}$ & $8.34{\times}10^{-9}$ & $4.50{\times}10^{-7}$ & $1{,}462$  & $1.67$ \\
    N$2$ & $3.08{\times}10^{-6}$ & $2.05{\times}10^{-8}$ & $3.01{\times}10^{-7}$ & $2{,}868$  & $10.59$ \\
    N$3$ & $1.88{\times}10^{-5}$ & $9.12{\times}10^{-8}$ & $8.13{\times}10^{-8}$ & $18{,}618$ & $2{,}060.12$ \\
    N$4$ & $1.34{\times}10^{-5}$ & $1.79{\times}10^{-7}$ & $2.17{\times}10^{-7}$ & $20{,}949$ & $13{,}372.70$ \\
    \bottomrule
  \end{tabular*}
\end{table}

\noindent\textit{Greedy-theorem diagnostics.} We illustrate the two greedy guarantees of
Theorems~\ref{thm:policy_bound_alg2} and~\ref{thm:explicit_rho_alg2}
numerically on representative instances.
Fig.~\ref{fig:greedy_decay} illustrates Theorem~\ref{thm:policy_bound_alg2} on a small geometric instance with \(n=14\), \(p=8\), \(\beta=3.2\), \(\sigma=0.5\), and \(k=4\). The budget is set to \(C\approx26.77\), about \(1.2\times\) the minimum spend needed to fit four edges at weight \(w_{\min}\). Among feasible small instances that use all four additions, we show the one with the largest observed departure of greedy from the Bellman optimum, \(V_1/G_1\approx1.13\). This provides a stress case for visualizing the gap. On easier instances the curves are nearly indistinguishable.
Fig.~\ref{fig:greedy_decay}(a) plots the optimal value-to-go \(V_l\) from exact
backward recursion, the greedy value \(G_l\), and the \(\rho_{\min}\) decay
envelope against the iteration \(l\). The greedy curve stays between zero and the
optimum, illustrating claim~(\ref{cond:greedy_alg2}).
Fig.~\ref{fig:greedy_decay}(b) compares the realized next value
\(V_{l+1}(\mathcal X_{l+1})\) with the contraction ceiling
\((1-\rho_l/k_{\mathrm{rem}}^{(l-1)})\,V_l(\mathcal X_l)\) at each iteration. The
realized value never exceeds the ceiling, illustrating
claim~(\ref{cond:contraction_alg2}).
The second guarantee, the explicit \(\rho_{\min}\) bound of
Theorem~\ref{thm:explicit_rho_alg2}, rests on two spectral envelopes that
Fig.~\ref{fig:spectral_envelopes} illustrates on instance N\(1\) across the three
lognormal regimes. The extreme eigenvalues \(\lambda_{\max}(\mM'_l)\) and
\(\lambda_{\min}(\mM'_l)\) stay inside the constant band
\([\lambda_{\min}^{\mathrm{lower}},\lambda_{\max}^{\mathrm{upper}}]\) as edges are
added, and the per-step gain \(\Delta\Rtot^{(l)}\) inside
\([\delta_{\min},\delta_{\max}]\). Both envelopes defining \(\rho_{\min}\) hold along the plotted greedy trajectories, consistent with Theorem~\ref{thm:explicit_rho_alg2}.

\begin{figure}[!t]
    \centering
    \begin{subfigure}{\columnwidth}
        \centering
        \includegraphics[width=\linewidth]{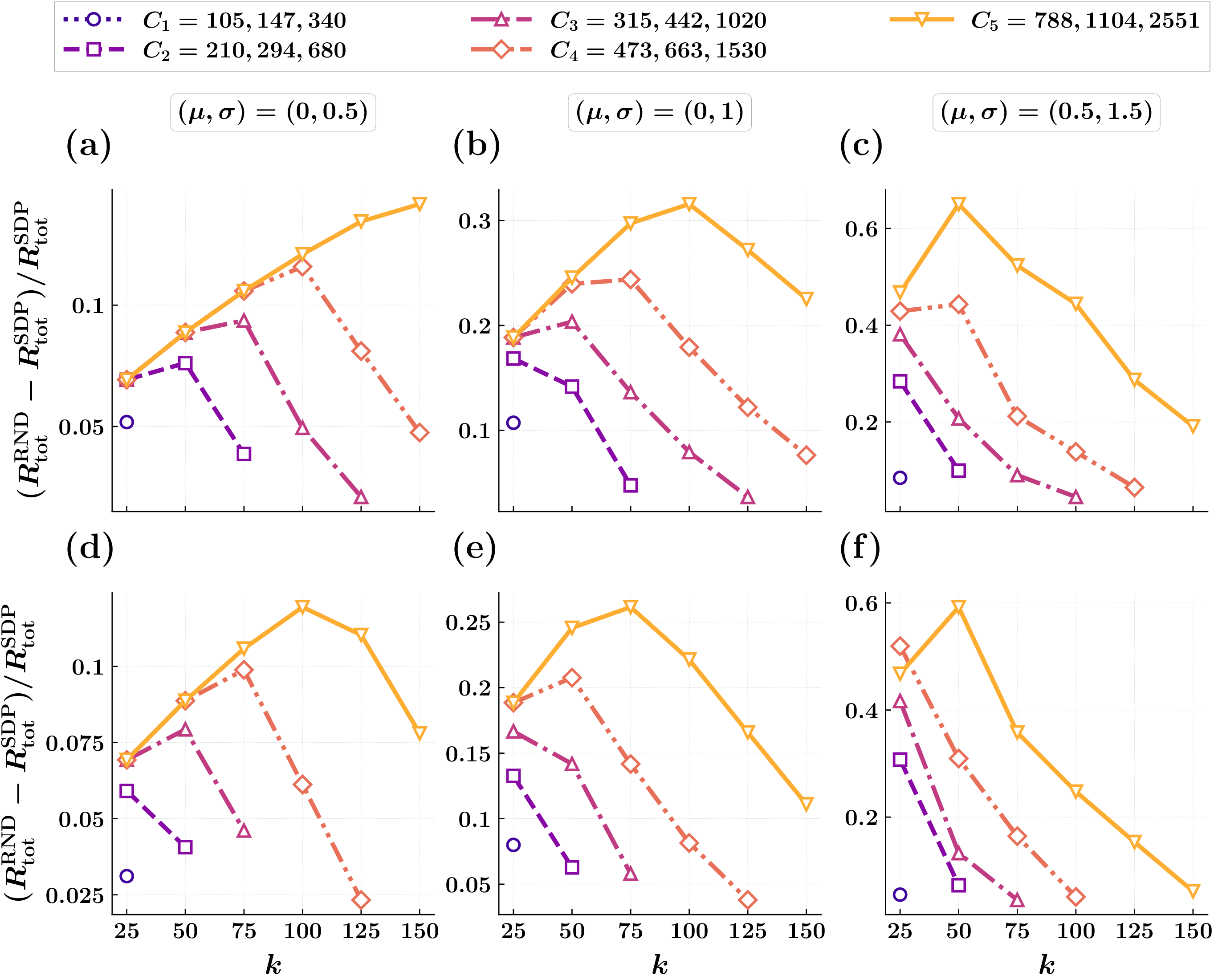}
        \caption{Relative rounding gap to the numerical SDP benchmark $(R_{\mathrm{tot}}^{\mathrm{RND}}-R_{\mathrm{tot}}^{\mathrm{SDP}})/R_{\mathrm{tot}}^{\mathrm{SDP}}$ versus $k$ at $n=500$, for the budget regimes of Section~\ref{sec:experimental-setup}; rows are $\beta\in\{1.6,3.2\}$ with five budget curves per panel.}
        \label{fig:gap}
    \end{subfigure}\\[2pt]
    {\color{gray!50}\hdashrule{\columnwidth}{0.3pt}{2pt}}\\[12pt]
    \begin{subfigure}{\columnwidth}
        \centering
        \includegraphics[width=\linewidth]{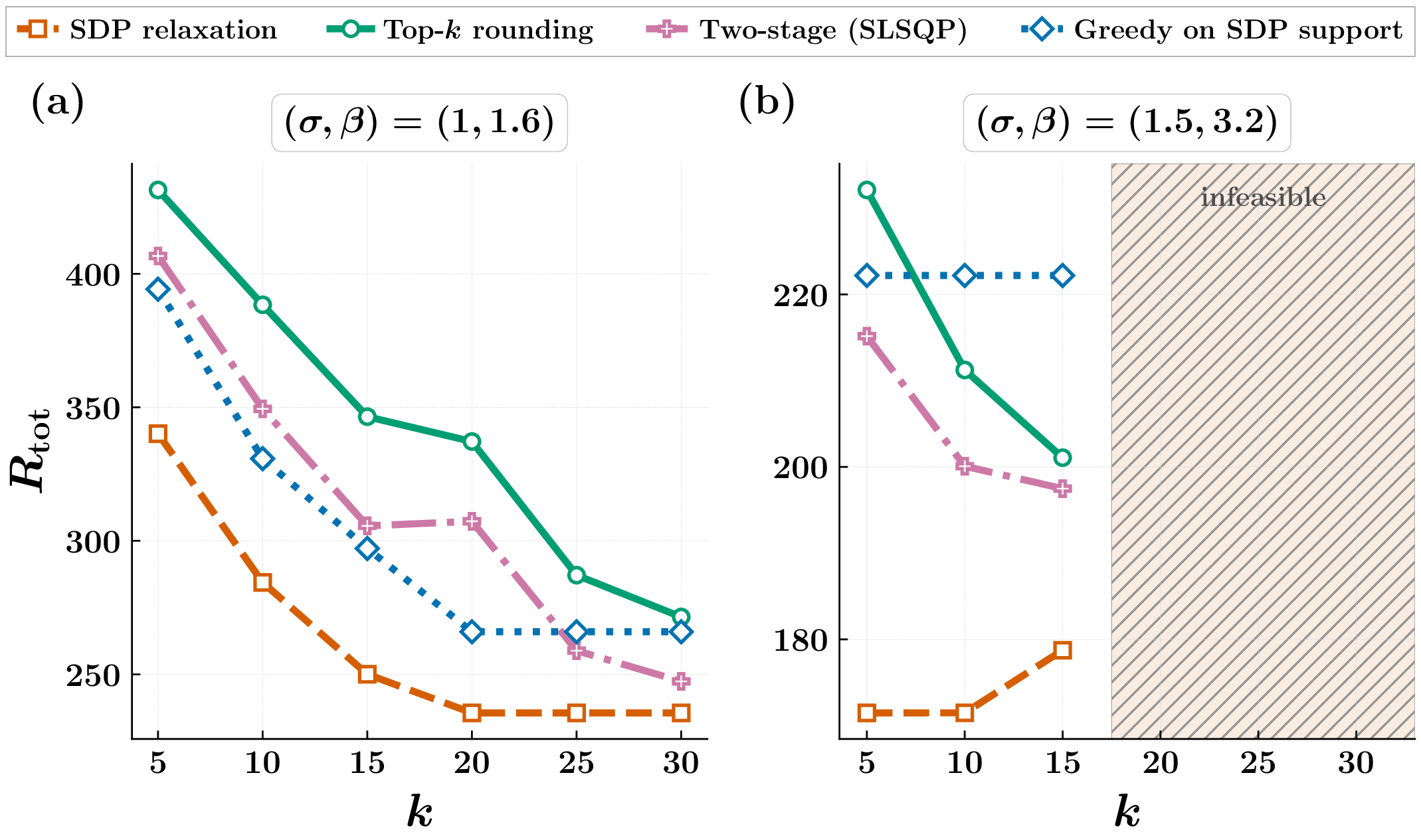}
        \caption{Rounding-scheme comparison at $n=50$: $R_{\mathrm{tot}}$ versus $k$ for a loose-budget (left) and tight-budget (right) regime; the shaded band marks infeasible $k$.}
        \label{fig:rounding_n50}
    \end{subfigure}
    \caption{Rounding quality. (i)~relative gap of top-$k$ rounding to the numerical SDP benchmark; (ii)~the three rounding schemes compared.}
    \label{fig:rounding_combined}
\end{figure}

\noindent\textit{Algorithm comparison.}~We compare the SDP relaxation, its rounding, and the greedy algorithm,
quantifying how much each reduces the total effective resistance
$R_{\mathrm{tot}}$.
Fig.~\ref{fig:combined_rtot}\,(\subref{fig:r_tot_vs_k}) reports the $n=50$ comparison. Each curve is averaged over $20$ random base graphs and normalized per instance by its baseline $R_{\mathrm{tot}}^{\mathrm{base}}$. The shaded region gives $\pm1$ standard deviation. The SDP relaxation is plotted as a numerical SDP benchmark, not as an implementable design. The feasible methods, greedy and rounding, are therefore assessed by their achieved objective and their gap to this benchmark. The gap to the numerical SDP benchmark reflects the combined effect of relaxation, discrete support recovery, numerical solver tolerance, and budget repair. The persistence of the top-$k$ gap suggests that the rounding map and subsequent repair step are important contributors. The exact-$k$ reserve greedy installs all $k$ edges whenever $(C,k)$ is feasible, so its curve spans the budget-feasible range of $k$. Under strong distance-dependent costs this range narrows as $\beta$ grows and the budget tightens, since fitting $k$ edges each at least at $w_{\min}$ becomes costlier. The exact-$k$ greedy must place all $k$ edges and spends myopically, spreading the budget thinner than the top-$k$ rounding. Under distance-dependent costs it therefore trails rounding at larger $\beta$.

The SDP curve is mildly non-monotone in $k$ under the strongest penalty, edging upward at the largest $k$. This does not contradict the weight-monotonicity of $R_{\mathrm{tot}}$. For a fixed support, increasing edge weights can only decrease $R_{\mathrm{tot}}$. The exactly-$k$ constraint changes the support as $k$ grows, while the fixed budget forces weight to be spread across more selected edges. In Fig.~\ref{fig:combined_rtot}\,(\subref{fig:r_tot_vs_k}), relaxing the budget from $C=58.89$ to $C=88.34$ lowers the flattening level from about $0.5$ to near $0.44$ in normalized units.

Fig.~\ref{fig:combined_rtot}\,(\subref{fig:r_tot_vs_k_budget}) extends the SDP budget sweep to $n=500$. This panel reports raw $R_{\mathrm{tot}}$ values with a different budget scale, and shows that the same budget-saturation trend appears in the relaxed problem at larger scale.

\noindent\textit{Real-network case studies.}~Fig.~\ref{fig:realworld} reports the reduction in $R_{\mathrm{tot}}$ for edge augmentation on the real infrastructure networks. For each network, the budget is fixed to its anchor value $C_{\min}(k_{\max})$, with $k_{\max}=15$, and the distance-cost parameter is varied over $\beta\in\{0,1.6,3.2\}$. The numerical SDP benchmark lies below both feasible methods across both networks and all $\beta$. For $\beta=0$, shown in Fig.~\ref{fig:realworld}(a) and Fig.~\ref{fig:realworld}(d), the greedy and rounding solutions are nearly indistinguishable. For larger values of $\beta$, the two feasible methods separate. The difference is most pronounced on the IEEE 57-bus grid at $\beta=3.2$, where Fig.~\ref{fig:realworld}(c) shows rounding below greedy across intermediate values of $k$. The same ordering appears on the OpenFlights network in Fig.~\ref{fig:realworld}(f), although the gap is smaller.

\noindent\textit{Rounding quality.}~Fig.~\ref{fig:rounding_combined}\,(\subref{fig:gap}) measures the relative gap of the integer-feasible rounded solution to the numerical SDP benchmark. The gap is itself non-monotone in $k$. It grows from near zero at small $k$, peaks at a regime-dependent cardinality $k^{*}$, and then decays in the saturation regime as the rounded objective moves closer to the numerical SDP benchmark. The peak moves to lower $k$ as the budget $C$ tightens, and its amplitude grows with $\sigma$. The rounded solution is therefore closest to the numerical SDP benchmark in the small-$k$ and saturated regimes. The worst-case penalty is concentrated at intermediate $k$ under heavy-tailed weights.
This motivates benchmarking three rounding schemes for recovering an integer design from the relaxed solution. \emph{Top-$k$} is the
\emph{rounding-and-repair} baseline of Section~\ref{framework}, the standard
relax-and-select heuristic~\cite{joshi2008sensor}. It keeps the $k$ edges of
largest relaxed selection $\vz^\star$, clamps their weights to
$[w_{\min},w_{\max}]$, and scales to the budget. \emph{Two-stage} keeps that support but
re-optimizes its weights via SLSQP, a convex subproblem with a closed-form
gradient~\cite{Minimizing_ER} that is no worse than top-$k$. \emph{Greedy on SDP
support} instead gates the candidates to the relaxation support, the edges whose
relaxed weight exceeds half its maximum. It adds edges by largest marginal
$R_{\mathrm{tot}}$ gain, drawing from all candidates when the support holds
fewer than $k$. Across both regimes (Fig.~\ref{fig:rounding_combined}\,(\subref{fig:rounding_n50})) the numerical SDP
benchmark lies below every rounding, two-stage dominates top-$k$, and greedy on the SDP support is
competitive at small $k$.

\begin{figure}[!t]
    \centering
    \begin{subfigure}{\columnwidth}
        \centering
        \includegraphics[width=\linewidth]{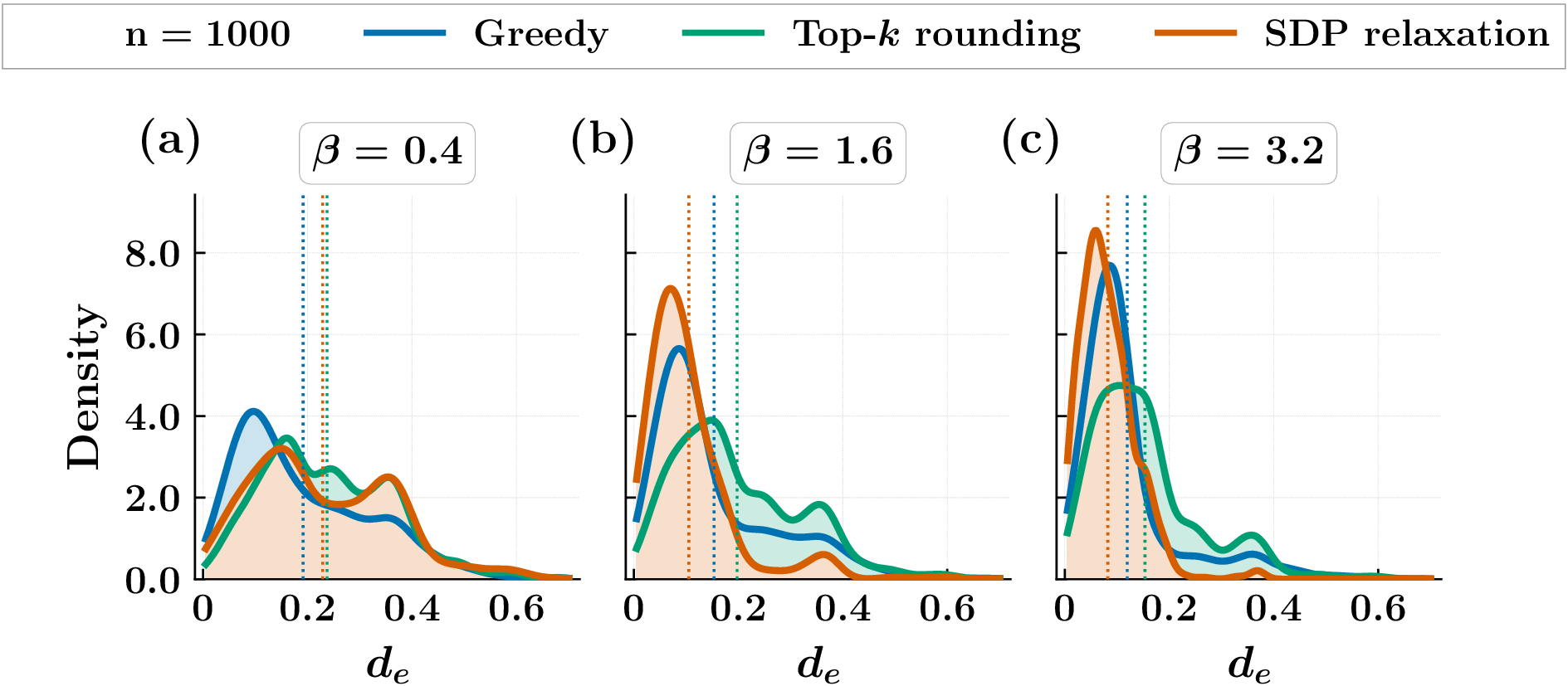}
        \caption{$n=1000$, $\mu=0$, $\sigma=0.5$; panels sweep distance-penalty $\beta$.}
        \label{fig:distance_distribution_grid_1000}
    \end{subfigure}\\[2pt]
    {\color{gray!50}\hdashrule{\columnwidth}{0.3pt}{2pt}}\\[12pt]
    \begin{subfigure}{\columnwidth}
        \centering
        \includegraphics[width=\linewidth]{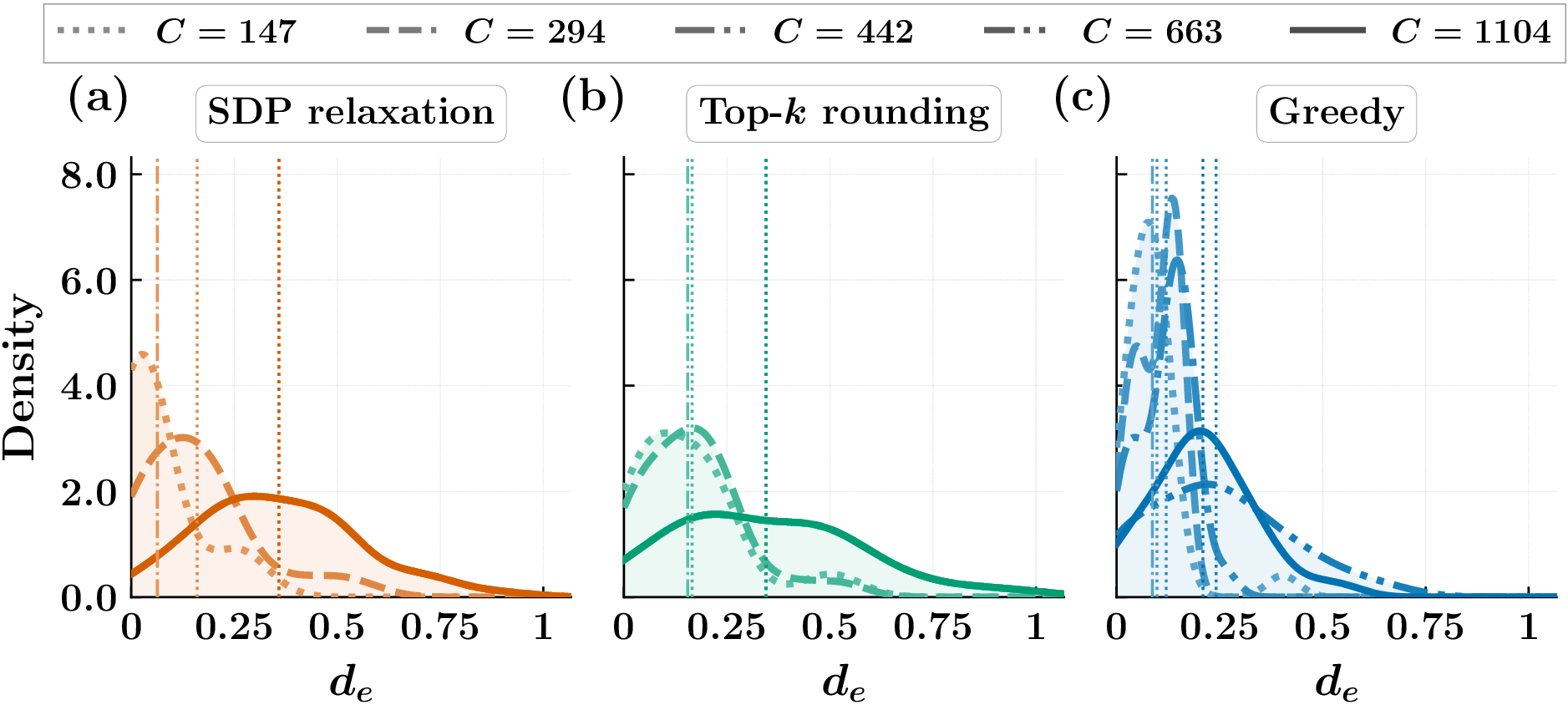}
        \caption{Fixed setting $(n=500,\ \mu=0,\ \sigma=1,\ \beta=1.6,\ k=25)$; curves sweep cost budget $C$.}
        \label{fig:Cost_budget_sensitivity}
    \end{subfigure}
    \caption{Gaussian KDE of selected-edge Euclidean distances for the SDP relaxation, its rounding, and the greedy algorithm. Dotted vertical lines mark the weighted mean distance per curve.}
    \label{fig:combined_kde}
\end{figure}

\noindent\textit{Spatial footprint.}
As a secondary diagnostic, Fig.~\ref{fig:combined_kde} summarizes how the selected conductance is distributed over candidate-edge lengths. The $x$-axis is the Euclidean endpoint separation \(d_e\), and the curves are weighted Gaussian kernel density estimates (KDEs). Greedy and rounding assign each selected edge a KDE weight equal to its installed conductance \(w_e\). The SDP relaxation instead uses the relaxed weights \(w_e^\star\), restricted to dominant relaxed selections \(w_e^\star>\tfrac12\max_{e'\in\Ec}w_{e'}^\star\). Thus the curves show the distribution of installed or relaxed conductance over edge length, not merely the number of selected edges. Dotted vertical lines mark the weighted mean distance.
  Fig.~\ref{fig:combined_kde} separates the algorithms along two axes. Panel~(\subref{fig:distance_distribution_grid_1000}) sweeps the distance penalty $\beta$ at $n=1000$. The rounding and SDP KDEs nearly coincide at $\beta=0.4$, and greedy is already mildly concentrated on shorter edges. The three curves separate further as $\beta$ grows to $1.6$ and $3.2$. The SDP concentrates sharply on short edges, while greedy retains a broader tail from occasional selection of longer edges with large marginal resistance reduction. Panel~(\subref{fig:Cost_budget_sensitivity}) varies the budget $C$. The SDP saturates for $C \ge 442$. Tight budgets pin its weights on short, cheap edges, while larger budgets reallocate toward longer, higher-gain edges until the budget becomes nonbinding. Its curves for $C \in \{442,663,1104\}$ therefore coincide, and rounding inherits the collapse, leaving only $C=147$ and $294$ distinct. Greedy instead does not saturate, so all five budget curves remain distinct as the relaxed ceiling reorders its reserve-capped sequential selection.

\section{Conclusion}

This paper studied budget-constrained augmentation of a pre-deployed weighted graph with total effective resistance as the robustness objective. The framework combines an SDP relaxation, feasible rounding procedures, and a cost-aware greedy method built on rank-one updates and biharmonic-distance caching. The relaxation lower-bounds the mixed-integer optimum and therefore supplies an \emph{a posteriori} gap estimate against any feasible discrete design. Empirically, distance-dependent per-unit costs reshape both the attainable resistance reduction and the spatial layout of installed conductance. They also expose a weakness of naive top-$k$ rounding: it turns a fractional, cost-aware SDP allocation into a discrete support and only then repairs the weights, whereas re-optimizing weights on that support recovers much of the lost performance. For the greedy method, the Bellman analysis yields a local residual-decay interpretation and a conservative spectral floor on the local policy ratio. Overall, the experiments show that cost-aware augmentation behaves differently from uniform-cost edge installation, both in the magnitude of robustness improvement and in the geometry of the selected links.

\appendices

\section{}\label{app:policy_bound}
\begin{proof}[Proof of Theorem~\ref{thm:policy_bound_alg2}]
The greedy edge is \(e_l^\star=\arg\max_{e\in\mathcal F_l}\Delta R_{\mathrm{tot}}^{(l)}(e)\), so \(\Delta R_{\mathrm{tot}}^{(l)}(e_l^\star)=\max_{e\in\mathcal F_l}\Delta R_{\mathrm{tot}}^{(l)}(e)\); Definition~\ref{def:local_policy_ratio_alg2} bounds this maximum below by \(\tfrac{\rho_l}{k_{\mathrm{rem}}^{(l-1)}}\,V_l(\mathcal X_l)\), which is~(\ref{cond:progress_alg2}).
Next, by the Bellman recursion, evaluating the maximum total achievable gain at the feasible greedy edge \(e_l^\star\) yields
\[
V_l(\mathcal X_l)
\ge
\Delta R_{\mathrm{tot}}^{(l)}(e_l^\star)
+
V_{l+1}\!\bigl(f(\mathcal X_l,e_l^\star)\bigr).
\]
Thus, after re-arrangement and using~(\ref{cond:progress_alg2}), this proves~(\ref{cond:contraction_alg2}).

For the value bound we argue by backward induction. The base case sets \(G_l(\mathcal X_l)=V_l(\mathcal X_l)=0\) when \(\mathcal F_l=\varnothing\) or \(k_{\mathrm{rem}}^{(l-1)}=0\); otherwise \(\mathcal F_l\neq\varnothing\) and \(k_{\mathrm{rem}}^{(l-1)}\ge1\), and
\[
\begin{aligned}
G_l(\mathcal X_l)
&= \Delta R_{\mathrm{tot}}^{(l)}(e_l^\star) + G_{l+1}\!\bigl(f(\mathcal X_l,e_l^\star)\bigr)\\
&\le \Delta R_{\mathrm{tot}}^{(l)}(e_l^\star) + V_{l+1}\!\bigl(f(\mathcal X_l,e_l^\star)\bigr)
\le V_l(\mathcal X_l),
\end{aligned}
\]
where the first inequality is the induction hypothesis \(G_{l+1}\le V_{l+1}\) and the second the Bellman optimality of \(V_l(\mathcal X_l)\) over \(\mathcal F_l\ni e_l^\star\).
Each one-step gain \(\Delta R_{\mathrm{tot}}^{(l)}(e)\ge 0\) (nonnegative numerator, denominator at least one), so the greedy value \(G_l(\mathcal X_l)\), a sum of such gains, satisfies \(G_l(\mathcal X_l)\ge 0\); together with \(G_l(\mathcal X_l)\le V_l(\mathcal X_l)\) this establishes~(\ref{cond:greedy_alg2}). Finally, \eqref{eq:product_decay} follows by applying~(\ref{cond:contraction_alg2}) at each step \(t=l,\dots,m\) along the greedy trajectory and multiplying the resulting contraction factors. The inequality \(1-z\le e^{-z}\) is used to replace the exact multiplicative residual-decay product by an exponential upper bound, thereby yielding a more interpretable closed-form estimate. The bound \eqref{eq:product_decay_uniform} follows by substituting
\(k_{\mathrm{rem}}^{(t-1)}=k-(t-1)\), bounding \(\rho_t\ge\rho_{\min}\) in each factor, re-indexing with \(j=k-t+1\), and using
\(1-z\le e^{-z}\) for \(z\ge 0\); the harmonic sum in the exponent is
\(\sum_{t=l}^{m}\tfrac{1}{k-t+1}=\sum_{j=k-m+1}^{k-l+1}\tfrac1j=H_{k-l+1}-H_{k-m}\).
\end{proof}

\section{}\label{app:explicit_rho}
\begin{proof}[Proof of Theorem~\ref{thm:explicit_rho_alg2}]
Since the rank-one updates are positive semidefinite, no candidate edge is augmented twice, and the selected weights lie in
\([w_{\min},w_{\max}]\), every realized greedy iterate satisfies \( \mM'_0 \preceq \mM'_l \preceq \mM'_{\max},
\, \text{for} \; l=1,\dots,k.\)
Therefore,
\(
\lambda_{\min}(\mM'_l)\ge \lambda_{\min}^{\mathrm{lower}},
\, \text{and} \,
\lambda_{\max}(\mM'_l)\le \lambda_{\max}^{\mathrm{upper}}.
\)

Let \(r_e^{(l)} \coloneqq \va_e^\top(\mM'_{l-1})^{-1}\va_e\) and \(\bigl(b_e^2\bigr)^{(l)} \coloneqq \|(\mM'_{l-1})^{-1}\va_e\|_2^2\) be the effective resistance and squared biharmonic distance of edge \(e\) at state \(\mathcal X_l\). The Rayleigh-quotient bounds for \((\mM'_{l-1})^{-1}\) and \((\mM'_{l-1})^{-2}\), with \(\|\va_e\|_2^2=2\) and the envelopes above, give \(r_e^{(l)} \le 2/\lambda_{\min}^{\mathrm{lower}}\) and \(2/(\lambda_{\max}^{\mathrm{upper}})^2 \le (b_e^2)^{(l)} \le 2/(\lambda_{\min}^{\mathrm{lower}})^2\).

Hence, for every feasible edge \(e\in\mathcal F_l\), the gain \(\Delta R_{\mathrm{tot}}^{(l)}(e)=n\,w_e^{(l)}\bigl(b_e^2\bigr)^{(l)}/\bigl(1+w_e^{(l)}r_e^{(l)}\bigr)\) is increasing in \(w_e^{(l)}\). Its lower bound follows by taking \(w_{\min}\) in the numerator, \(w_{\max}\) in the denominator, \(\bigl(b_e^2\bigr)^{(l)}\ge 2/(\lambda_{\max}^{\mathrm{upper}})^2\), and \(r_e^{(l)}\le 2/\lambda_{\min}^{\mathrm{lower}}\); its upper bound follows from \(1+w_e^{(l)}r_e^{(l)}\ge 1\), \(w_e^{(l)}\le w_{\max}\), and \(\bigl(b_e^2\bigr)^{(l)}\le 2/(\lambda_{\min}^{\mathrm{lower}})^2\). Together,
\[
\begin{aligned}
\delta_{\min}
\;\coloneqq\;
\frac{2n\,w_{\min}/(\lambda_{\max}^{\mathrm{upper}})^2}{1+2w_{\max}/\lambda_{\min}^{\mathrm{lower}}}
&\;\le\;
\Delta R_{\mathrm{tot}}^{(l)}(e)\\
&\;\le\;
\frac{2n\,w_{\max}}{(\lambda_{\min}^{\mathrm{lower}})^2}
\;\eqqcolon\;
\delta_{\max}.
\end{aligned}
\]

Since \(\delta_{\max}\) upper-bounds the one-step gain at every reachable state and at most \(k_{\mathrm{rem}}^{(l-1)}\) augmentations remain from iteration \(l\), we have \(V_l(\mathcal X_l)\le k_{\mathrm{rem}}^{(l-1)}\,\delta_{\max}\). As the best feasible one-step gain is at least \(\delta_{\min}\), it follows that
\[
\max_{e\in\mathcal F_l}\Delta R_{\mathrm{tot}}^{(l)}(e)
\ge
\delta_{\min}
\ge
\dfrac{\delta_{\min}}{\delta_{\max}}\,
\frac{1}{k_{\mathrm{rem}}^{(l-1)}}\,V_l(\mathcal X_l).
\]
Thus, whenever $\rho_l$ is defined,
\(
\rho_l \ge \frac{\delta_{\min}}{\delta_{\max}},
\, l=1,\dots,k,
\)
and substituting the expressions for \(\delta_{\min}\) and \(\delta_{\max}\)
gives \eqref{eq:explicit_rho_alg2}.
\end{proof}

\ifCLASSOPTIONcaptionsoff
  \newpage
\fi

\bibliographystyle{IEEEtran}
\bibliography{reference}

\end{document}